\definecolor{green_dark}{rgb}{0,0.6,0}
\newcommand{\N}{\mathbb N}
\newcommand{\Z}{\mathbb Z}
\newcommand{\R}{\mathbb R}
\newcommand{\C}{\mathbb C}
\newcommand{\re}[1]{\mbox{Re} \ #1} 
\newcommand{\im}[1]{\mbox{Im} \ #1} 
\newcommand{\scal}[1]{\left\langle #1 \right\rangle} 
\newcommand{\defendproof}{\hfill $\Box$} 
\newtheorem{theorem}{Theorem}[section]
\newtheorem{defi}[theorem]{Definition}
\newtheorem{lem}[theorem]{Lemma} 
\newtheorem{prop}[theorem]{Proposition}
\newtheorem{coro}[theorem]{Corollary} 
\theoremstyle{definition}
\title[Global existence mass-critical fourth-order Schr\"odinger]{Global existence for the defocusing mass-critical nonlinear fourth-order Schr\"odinger equation below the energy space} 
\author[V. D. Dinh]{Van Duong Dinh}
\address[V. D. Dinh]{Institut de Math\'ematiques de Toulouse, Universit\'e Toulouse III Paul Sabatier, 31062 Toulouse Cedex 9, France}
\email{dinhvan.duong@math.univ-toulouse.fr}
\keywords{Nonlinear fourth-order Schr\"odinger; Global well-posedness; Almost conservation law; Morawetz inequality}
\subjclass[2010]{35G20, 35G25, 35Q55}
\begin{document}

\maketitle
\begin{abstract}
In this paper, we consider the defocusing mass-critical nonlinear fourth-order Schr\"odinger equation. Using the $I$-method combined with the interaction Morawetz estimate, we prove that the problem is globally well-posed in $H^\gamma(\R^d), 5\leq d \leq 7$ with $\gamma(d)<\gamma<2$, where $\gamma(5)=\frac{8}{5}, \gamma(6)=\frac{5}{3}$ and $\gamma(7)=\frac{13}{7}$.    
\end{abstract}


\section{Introduction}
\setcounter{equation}{0}
Consider the defocusing mass-critical nonlinear fourth-order Schr\"odinger equation, namely
\begin{align}
\left\{
\begin{array}{rcl}
i\partial_t u(t,x) + \Delta^2 u(t,x) &=& -(|u|^{\frac{8}{d}} u)(t,x), \quad t\geq 0, x \in \R^d, \\
u(0,x) &=& u_0(x) \in H^\gamma(\R^d), 
\end{array}
\right.
\tag{NL4S}
\end{align}
where $u(t,x)$ is a complex valued function in $\R^+ \times \R^d$. \newline
\indent The fourth-order Schr\"odinger equation was introduced by Karpman \cite{Karpman} and Karpman-Shagalov \cite{KarpmanShagalov} taking into account the role of small fourth-order dispersion terms in the propagation of intense laser beams in a bulk medium with Kerr nonlinearity. The study of nonlinear fourth-order Schr\"odinger equation has attracted a lot of interest in the past several years (see \cite{Pausader}, \cite{Pausadercubic}, \cite{HaoHsiaoWang06}, \cite{HaoHsiaoWang07}, \cite{HuoJia}, \cite{MiaoXuZhao09}, \cite{MiaoXuZhao11}, \cite{MiaoWuZhang} and references therein).\newline
\indent It is known (see \cite{Dinhfract} or \cite{Dinhfourt}) that (NL4S) is locally well-posed in $H^\gamma(\R^d)$ for $\gamma>0$ satisfying for $d\ne 1, 2, 4$,
\begin{align}
\lceil \gamma \rceil \leq 1+\frac{8}{d}. \label{regularity condition}
\end{align} 
Here $\lceil \gamma \rceil$ is the smallest integer greater than or equal to $\gamma$. This condition ensures the nonlinearity to have enough regularity. The time of existence depends only on the $H^\gamma$-norm of initial data. Moreover, the local solution enjoys mass conservation, i.e.
\[
M(u(t)):=\|u(t)\|^2_{L^2(\R^d)} = \|u_0\|^2_{L^2(\R^d)},
\] 
and $H^2$-solution has conserved energy, i.e.
\[
E(u(t)):=\int_{\R^d} \frac{1}{2}|\Delta u(t,x)|^2 + \frac{d}{2d+8}|u(t,x)|^{\frac{2d+8}{d}} dx = E(u_0).
\]
The conservations of mass and energy together with the persistence of regularity (see \cite{Dinhfourt}) yield the global well-posedness for (NL4S) in $H^\gamma(\R^d)$ with $\gamma\geq 2$ satisfying for $d\ne 1, 2, 4$, $(\ref{regularity condition})$. We also have (see \cite{Dinhfract} or \cite{Dinhfourt}) the local well-posedness for (NL4S) with initial data $u_0 \in L^2(\R^d)$ but the time of existence depends on the profile of $u_0$ instead of its $H^\gamma$-norm. The global existence holds for small $L^2$-norm initial data. For large $L^2$-norm initial data, the conservation of mass does not immediately give the global well-posedness in $L^2(\R^d)$. For the global well-posedness with large $L^2$-norm initial data, we refer the reader to \cite{PausaderShao10} where the authors established the global well-posedness and scattering for (NL4S) in $L^2(\R^d), d\geq 5$. \newline
\indent The main goal of this paper is to prove the global well-posedness for (NL4S) in a low regularity space $H^\gamma(\R^d), d\geq 5$ with $\gamma <2$. Since we are working with low regularity data, the conservation of energy does not hold. In order to overcome this problem, we make use of the $I$-method and the interaction Morawetz inequality. Due to the high-order term $\Delta^2 u$, we requires the nonlinearity to have at least two orders  of derivatives in order to successfully establish the almost conservation law. We thus restrict ourself in spatial space of dimensions $d=5, 6, 7$. \newline
\indent Let us recall some known results about the global existence below the energy space for the nonlinear fourth-order Schr\"odinger equation. To our knowledge, the first result to address this problem belongs to Guo in \cite{Guo}, where the author considered a more general fourth-order Schr\"odinger equation, namely
\[
i\partial_t u + \lambda \Delta u + \mu \Delta^2 u + \nu |u|^{2m}u =0,
\]
and established the global existence in $H^\gamma(\R^d)$ for $1+\frac{md-9+\sqrt{(4m-md+7)^2+16}}{4m}<\gamma<2$ where $m$ is an integer satisfying $4<md<4m+2$. The proof is based on the $I$-method which is a modification of the one invented by $I$-Team \cite{I-teamalmost} in the context of nonlinear Schr\"odinger equation. Later, Miao-Wu-Zhang studied the defocusing cubic fourth-order Schr\"odinger equation, namely
\[
i\partial_t u +\Delta^2 u  +|u|^2 u=0,
\]
and proved the global well-posedness and scattering in $H^\gamma(\R^d)$ with $\gamma(d)<\gamma<2$ where $\gamma(5)=\frac{16}{11}, \gamma(6)=\frac{16}{9}$ and $\gamma(7)=\frac{45}{23}$. The proof relies on the combination of $I$-method and a new interaction Morawetz inequality. Recently, the author in \cite{Dinhglobal} showed that the defocusing cubic fourth-order Schr\"odinger equation is globally well-posed in $H^\gamma(\R^4)$ with $\frac{60}{53}<\gamma<2$. The analysis is carried out in Bourgain spaces $X^{\gamma,b}$ which is similar to those in \cite{I-teamalmost}. Note that in the above considerations, the nonlinearity is algebraic. This allows to write explicitly the commutator between the $I$-operator and the nonlinearity by means of the Fourier transform, and then control it by multi-linear analysis. When one considers the mass-critical nonlinear fourth-order Schr\"odinger equation in dimensions $d\geq 5$, this method does not work. We thus rely purely on Strichartz and interaction Morawetz estimates. The main result of this paper is the following:
\begin{theorem} \label{theorem global existence}
Let $d= 5, 6, 7$. The initial value problem \emph{(NL4S)} is globally well-posed in $H^\gamma(\R^d)$, for any $\gamma(d)<\gamma<2$, where $\gamma(5)=\frac{8}{5}, \gamma(6)=\frac{5}{3}$ and $\gamma(7)=\frac{13}{7}$.
\end{theorem}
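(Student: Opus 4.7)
The plan is to adapt the $I$-method of Colliander-Keel-Staffilani-Takaoka-Tao, combined with an interaction Morawetz inequality tailored to the biharmonic dispersion $e^{it\Delta^2}$. Introduce the smoothing Fourier multiplier $I = I_N$ with symbol $m_N(\xi)$ that is a smooth, radial, non-increasing function equal to $1$ on $\{|\xi| \leq N\}$ and equal to $(N/|\xi|)^{2-\gamma}$ on $\{|\xi| \geq 2N\}$. Then $I : H^\gamma(\R^d) \to H^2(\R^d)$ is bounded with $\|Iu\|_{H^2} \lesssim N^{2-\gamma}\|u\|_{H^\gamma}$, so the modified energy $E(Iu(t))$ is finite for $H^\gamma$ data and satisfies $E(Iu_0) \lesssim N^{2(2-\gamma)}$. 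The strategy is to show that $E(Iu(t))$ grows slowly enough that, by choosing $N = N(T)$ as a suitable power of $T$, one obtains $\|u(T)\|_{H^\gamma} < \infty$ for any prescribed time $T$.

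First, I would set up a modified local well-posedness at the $H^2$ level. Applying $I$ to (NL4S) gives
\[
i\partial_t (Iu) + \Delta^2 (Iu) = -I(|u|^{\frac{8}{d}}u),
\]
and a contraction mapping argument in Strichartz spaces adapted to $H^2$ yields existence on any interval $J$ on which a chosen Strichartz-type norm $\|Iu\|_{X(J)}$ is bounded by a fixed small universal constant. Second, I would establish the almost conservation law. Differentiating $E(Iu)$ and using the equation produces a commutator
\[
\frac{d}{dt}E(Iu) = \operatorname{Re}\int_{\R^d} \big(|Iu|^{\frac{8}{d}}Iu - I(|u|^{\frac{8}{d}}u)\big)\,\overline{\partial_t Iu}\,dx,
\]
and after integrating over $J$ and applying Plancherel, Littlewood-Paley decomposition, fractional chain and Leibniz rules, and Strichartz estimates, one should obtain an increment estimate of the schematic form
\[
\sup_{t \in J}\big|E(Iu(t)) - E(Iu_0)\big| \lesssim N^{-\alpha(d)}\|Iu\|_{X(J)}^{\beta},
\]
for some $\alpha(d) > 0$.

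Third, I would derive an interaction Morawetz inequality for the smoothed solution $Iu$, in the spirit of the fourth-order Morawetz identities used for the cubic case by Miao-Wu-Zhang. This supplies an a priori spacetime bound of schematic form
\[
\|Iu\|_{L^q_t L^r_x([0,T]\times \R^d)}^{\theta} \lesssim M(u_0)^{a}\, \sup_{t \in [0,T]} E(Iu(t))^{b},
\]
with $(q,r)$ dictated by the biharmonic scaling. Combined with the modified local well-posedness, this controls the number of Strichartz-unit subintervals partitioning $[0,T]$ in terms of $\sup_{t}E(Iu(t))$. Summing the increments from step two over this partition yields a bootstrap inequality
\[
\sup_{t \in [0,T]} E(Iu(t)) \leq E(Iu_0) + C\,T^{\rho(d,\gamma)}\,N^{-\alpha(d)},
\]
which is closed by choosing $N = N(T)$ to be an appropriate power of $T$. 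The condition that this choice of $N$ is consistent with the initial data $E(Iu_0) \lesssim N^{2(2-\gamma)}$ translates, in each dimension, into the explicit threshold $\gamma > \gamma(d)$. Once $E(Iu(t))$ stays bounded on $[0,T]$, the estimate $\|u(t)\|_{H^\gamma} \lesssim \|Iu(t)\|_{H^2}$ gives a polynomial-in-$T$ bound on $\|u(t)\|_{H^\gamma}$, hence global existence.

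The hard part is the commutator estimate underlying step two, because the nonlinearity $|u|^{\frac{8}{d}}u$ is not algebraic for $d = 5, 6, 7$. Unlike the cubic situation treated by Miao-Wu-Zhang, one cannot isolate $|Iu|^{\frac{8}{d}}Iu - I(|u|^{\frac{8}{d}}u)$ via an explicit multilinear Fourier decomposition, and all the bookkeeping must instead be done through the fractional chain/Leibniz rule, Littlewood-Paley analysis and Strichartz estimates. The accounting is tight because the biharmonic energy carries two full orders of derivative, while the commutator gain $N^{-\alpha(d)}$ must outpace the Morawetz-driven growth in the number of subintervals. Optimising this balance in each dimension is what should produce the exponents $\gamma(5) = \frac{8}{5}$, $\gamma(6) = \frac{5}{3}$ and $\gamma(7) = \frac{13}{7}$.
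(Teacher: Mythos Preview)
Your high-level strategy---$I$-method plus almost conservation plus interaction Morawetz plus bootstrap---matches the paper. Two technical divergences are worth flagging. The paper normalizes by rescaling, setting $u_\lambda(t,x)=\lambda^{-d/2}u(\lambda^{-4}t,\lambda^{-1}x)$ with $\lambda\sim N^{(2-\gamma)/\gamma}$ so that $E(Iu_\lambda(0))\leq\tfrac14$, rather than carrying $E(Iu_0)\lesssim N^{2(2-\gamma)}$ through the argument; the almost conservation law is proved under the normalized hypothesis $\|Iu_0\|_{H^2}\leq 1$. Also, the Morawetz inequality is applied to the genuine solution $u_\lambda$, not to $Iu$. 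Since $Iu$ does not solve (NL4S), a Morawetz identity for $Iu$ would generate extra commutator errors you have not accounted for; the paper instead bounds $\|u_\lambda\|_{L^\infty_t\dot H^{1/2}_x}$ in terms of $\|Iu_\lambda\|_{L^\infty_t\dot H^2_x}$ by elementary frequency localization after applying Morawetz to $u_\lambda$.

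The genuine gap is your claim that $\gamma(d)$ arises from ``optimising the balance'' between the Morawetz-driven subinterval count and the decay $N^{-\alpha}$. In the paper that balance gives only $\gamma>\tfrac{8(d-4)}{3d-8}$, which is \emph{strictly weaker} than $\gamma(d)$ for each $d\in\{5,6,7\}$. The thresholds $\gamma(d)=\max\bigl\{3-\tfrac{8}{d},\tfrac{8}{d}\bigr\}$ are instead hard constraints inside the almost conservation law itself: the commutator estimates require $\alpha:=2-\gamma+\delta<\tfrac{8}{d}-1$ so that the H\"older-continuous fractional chain rule applies to $F''(u)=O(|u|^{8/d-1})$, and require $\alpha+1<\gamma$ so that $\scal{\nabla}^{\alpha+1}u$ is controlled by $\scal{\Delta} Iu$ in Strichartz norms. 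It is the limited regularity of the non-algebraic nonlinearity in the energy-increment step---not the final bootstrap numerology---that fixes $\gamma(5)=\tfrac85$, $\gamma(6)=\tfrac53$, $\gamma(7)=\tfrac{13}{7}$; your outline does not yet isolate this mechanism.
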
  
The proof of the above theorem is based on the combination of the $I$-method and the interaction Morawetz inequality which is similar to those given in \cite{SilvaPavlovicStaffilaniTzirakis}. The $I$-method was first introduced by $I$-Team in \cite{I-teamalmost} in order to treat the nonlinear Schr\"odinger equation at low regularity. The idea is to replace the non-conserved energy $E(u)$ when $\gamma<2$ by an ``almost conserved'' variance $E(Iu)$ with $I$ a smoothing operator which is the identity at low frequency, and behaves like a fractional integral operator of order $2-\gamma$ at high frequency. Since $Iu$ is not a solution of (NL4S), we may expect an energy increment. The key is to show that the modified energy $E(Iu)$ is an ``almost conserved'' quantity in the sense that the time derivative of $E(Iu)$ decays with respect to a large parameter $N$ (see Section $\ref{section preliminaries}$ for the definition of $I$ and $N$). To do so, we need delicate estimates on the commutator between the $I$-operator and the nonlinearity. Note that in our setting, the nonlinearity is not algebraic. Thus we can not apply the Fourier transform technique. Fortunately, thanks to a special Strichartz estimate $(\ref{strichartz estimate biharmonic 4order})$, we are able to apply the technique given in \cite{VisanZhang} to control the commutator. The interaction Morawetz inequality for the nonlinear fourth-order Schr\"odinger equation was first introduced in \cite{Pausadercubic} for $d\geq 7$, and was extended for $d\geq 5$ in \cite{MiaoWuZhang}. With this estimate, the interpolation argument and Sobolev embedding give for any compact interval $J$,
\begin{align}
\|u\|_{M(J)}:=\|u\|_{L^{\frac{8(d-3)}{d}}_t L^{\frac{2(d-3)}{d-4}}_x} \lesssim |J|^{\frac{d-4}{8(d-3)}} \|u_0\|^{\frac{1}{d-3}}_{L^2_x} \|u\|^{\frac{d-4}{d-3}}_{L^\infty_t\dot{H}^{\frac{1}{2}}_x}. \label{morawetz norm}
\end{align}
As a byproduct of the Strichartz estimates and $I$-method, we show the almost conservation law for the modified energy of (NL4S), that is if $u\in C^\infty_0(\R^d)$ is a solution to (NL4S) on a time interval $J=[0,T]$, and satisfies $\|Iu_0\|_{H^2_x}\leq 1$ and if $u$ satisfies in addition the a priori bound $\|u\|_{M(J)} \leq \mu$ for some small constant $\mu>0$, then 
\[
\sup_{t\in[0,T]} |E(Iu(t))-E(Iu_0)| \lesssim N^{-(2-\gamma+\delta)}.
\]
for $\max\left\{3-\frac{8}{d}, \frac{8}{d} \right\}<\gamma<2$ and $0<\delta<\gamma+\frac{8}{d}-3$. \newline 
\indent We now briefly outline the idea of the proof. Let $u$ be a global in time solution to (NL4S). 
Observe that for any $\lambda>0$,
\begin{align}
u_\lambda(t,x):= \lambda^{-\frac{d}{2}} u(\lambda^{-4}t, \lambda^{-1}x) \label{scaling}
\end{align}
is also a solution to (NL4S). By choosing 
\begin{align}
\lambda \sim N^{\frac{2-\gamma}{\gamma}}, \label{lambda introduction}
\end{align}
and using some harmonic analysis, we can make $E(Iu_\lambda(0)) \leq \frac{1}{4}$ by taking $\lambda$ sufficiently large depending on $\|u_0\|_{H^\gamma_x}$ and $N$. Fix an arbitrary large time $T$. The main goal is to show 
\begin{align}
E(Iu_\lambda(\lambda^4T)) \leq 1. \label{almost conservation modified energy}
\end{align} 
With this bound, we can easily obtain the growth of $\|u(T)\|_{H^\gamma_x}$, and the global well-posedness in $H^\gamma(\R^d)$ follows immediately. In order to get $(\ref{almost conservation modified energy})$, we claim that
\[
\|u_\lambda\|_{M([0,t])} \leq K t^{\frac{d-4}{8(d-3)}}, \quad \forall t \in [0,\lambda^4T],
\]
for some constant $K$. If it is not so, then there exists $T_0\in[0,\lambda^4T]$ such that
\begin{align}
\|u_\lambda\|_{M([0,T_0])} &> K T_0^{\frac{d-4}{8(d-3)}}, \label{assumption 1}\\
\|u_\lambda\|_{M([0,T_0])} & \leq 2K T_0^{\frac{d-4}{8(d-3)}}. \label{assumption 2}
\end{align}
Using $(\ref{assumption 2})$, we can split $[0,T_0]$ into $L$ subintervals $J_k, k=1,...,L$ so that
\[
\|u_\lambda\|_{M(J_k)} \leq \mu.
\]
The number $L$ must satisfy
\begin{align}
L \sim T_0^{\frac{d-4}{d}}. \label{L introduction}
\end{align}
Thus we can apply the almost conservation law to get
\[
\sup_{[0,T_0]} E(Iu_\lambda(t)) \leq E(Iu_\lambda(0)) + N^{-(2-\gamma+\delta)} L.
\]
Since $E(Iu_\lambda(0)) \leq \frac{1}{4}$, in order to have $E(Iu_\lambda(t)) \leq 1$ for all $t\in[0,T_0]$, we need
\begin{align}
N^{-(2-\gamma+\delta)} L \ll \frac{1}{4}. \label{smallness introduction}
\end{align}
Combining $(\ref{lambda introduction}), (\ref{L introduction})$ and $(\ref{smallness introduction})$, we obtain the condition on $\gamma$. Next, using $(\ref{morawetz norm})$ together with some harmonic analysis, we estimate
\[
\|u_\lambda\|_{M([0,T_0])} \lesssim T_0^{\frac{d-4}{8(d-3)}} \|u_0\|_{L^2_x}^{\frac{1}{d-3}} \sup_{[0,T_0]} \Big(\|u_0\|^{\frac{3}{4}} \|Iu_\lambda(t)\|_{\dot{H}^2_x}^{\frac{1}{4}} + N^{-\frac{3}{4}} \|Iu_\lambda(t)\|_{\dot{H}^2_x} \Big)^{\frac{d-4}{d-3}}.
\]
Since $\|I u_\lambda(t)\|_{\dot{H}^2_x}\lesssim E(Iu_\lambda(t)) \leq 1$ for all $t\in [0,T_0]$, we get
\[
\|u_\lambda\|_{M([0,T_0])} \leq C T_0^{\frac{d-4}{8(d-3)}},
\]
for some constant $C>0$. This leads to a contradiction to $(\ref{assumption 1})$ for an appropriate choice of $K$. Thus we have the claim and also 
\[
E(Iu_\lambda(t)) \leq 1, \quad \forall t\in [0,\lambda^4T].
\]
For more details, we refer the reader to Section $\ref{section global well-posedness}$. \newline
\indent This paper is organized as follows. In Section $\ref{section preliminaries}$, we introduce some notations and recall some results related to our problem. In Section $\ref{section almost conservation law}$, we show the almost conservation law for the modified energy. Finally, the proof of our main result is given in Section $\ref{section global well-posedness}$.
\section{Preliminaries} \label{section preliminaries}
\setcounter{equation}{0}
In the sequel, the notation $A \lesssim B$ denotes an estimate of the form $A\leq CB$ for some constant $C>0$. The notation $A \sim B$ means that $A \lesssim B$ and $B \lesssim A$. We write $A \ll B$ if $A \leq cB$ for some small constant $c>0$. We also use $\scal{a}:=1+|a|$.
\subsection{Nonlinearity}
Let $F(z):= |z|^{\frac{8}{d}} z, d=5, 6, 7$  be the function that defines the nonlinearity in (NL4S). The derivative $F'(z)$ is defined as a real-linear operator acting on $w \in \C$ by
\[
F'(z)\cdot w:= w \partial_z F(z) + \overline{w} \partial_{\overline{z}} F(z), 
\]
where
\[
\partial_z F(z)=\frac{2d+8}{2d} |z|^{\frac{8}{d}}, \quad \partial_{\overline{z}} F(z) = \frac{4}{d}|z|^{\frac{8}{d}} \frac{z}{\overline{z}}.
\]
We shall identify $F'(z)$ with the pair $(\partial_z F(z), \partial_{\overline{z}} F(z))$, and define its norm by
\[
|F'(z)| := |\partial_zF(z)| + |\partial_{\overline{z}} F(z)|.
\]
It is clear that $|F'(z)| = O(|z|^{\frac{8}{d}})$. 
We also have the following chain rule
\[
\partial_k F(u) = F'(u) \partial_k u, 
\] 
for $k \in \{1,\cdots, d\}$. In particular, we have
\[
\nabla F(u)= F'(u) \nabla u. 
\]
\indent We next recall the fractional chain rule to estimate the nonlinearity.
\begin{lem}\label{lem fractional chain}
Suppose that $G\in C^1(\C, \C)$, and $\alpha \in (0,1)$. Then for $1 <q \leq q_2 <\infty$ and $1<q_1 \leq \infty$ satisfying $\frac{1}{q}=\frac{1}{q_1}+\frac{1}{q_2}$, 
\[
\||\nabla|^\alpha G(u) \|_{L^q_x} \lesssim \|G'(u) \|_{L^{q_1}_x} \||\nabla|^\alpha u \|_{L^{q_2}_x}.
\]
\end{lem}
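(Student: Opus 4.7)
The statement is the classical Christ–Weinstein fractional chain rule, and my plan is to sketch the standard harmonic-analytic proof.

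The first step is to reduce $\||\nabla|^\alpha G(u)\|_{L^q_x}$ to a Stein-type difference square function. For $\alpha\in(0,1)$ and $1<q<\infty$, one has the equivalence
\[
\||\nabla|^\alpha f\|_{L^q_x}\sim \left\|\left(\int_0^\infty \Bigl(\frac{1}{|B_r|}\int_{|h|\le r}|f(\cdot+h)-f(\cdot)|\,dh\Bigr)^2\frac{dr}{r^{1+2\alpha}}\right)^{1/2}\right\|_{L^q_x},
\]
which is a consequence of the Littlewood–Paley characterization of the Triebel–Lizorkin space $F^\alpha_{q,2}$ (equivalently, the inhomogeneous Sobolev space $W^{\alpha,q}$ for $1<q<\infty$). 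Applying this equivalence on both sides of the desired inequality translates the problem into a bound involving difference quotients of $G(u)$ and of $u$.

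The second step is the pointwise identity, valid since $G\in C^1$,
\[
G(u(x+h))-G(u(x))=\Bigl(\int_0^1 G'\bigl(u(x)+\tau(u(x+h)-u(x))\bigr)\,d\tau\Bigr)\bigl(u(x+h)-u(x)\bigr).
\]
The heart of the argument is then an absorption step: one shows that the average of $|G'|$ along the line segment from $u(x)$ to $u(x+h)$ is dominated pointwise by $M(|G'(u)|)(x)+M(|G'(u)|)(x+h)$, where $M$ denotes the Hardy–Littlewood maximal operator. This is accomplished by the Christ–Weinstein averaging trick, which decouples the $G'$ factor from the difference $|u(x+h)-u(x)|$ without requiring any regularity of $G'$ beyond continuity.

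The third step combines these ingredients: substitute the above bound into the Stein square function of $G(u)$, apply Cauchy–Schwarz to separate the $G'$ factor from the $u$-difference factor, and then apply Hölder's inequality in $x$ with exponents $q_1$ and $q_2$. The Hardy–Littlewood maximal inequality gives $\|M(|G'(u)|)\|_{L^{q_1}_x}\lesssim \|G'(u)\|_{L^{q_1}_x}$ for $1<q_1<\infty$ (trivially for $q_1=\infty$), while the reverse seminorm equivalence applied to the remaining $u$-difference square function yields $\||\nabla|^\alpha u\|_{L^{q_2}_x}$ on the $L^{q_2}$ side. The main obstacle is the absorption step, since $G$ is merely $C^1$ and no pointwise comparison between $G'$ at $u(x)$ and at the intermediate value is available: this forces the detour through the maximal function, and the remainder of the argument is routine harmonic analysis.
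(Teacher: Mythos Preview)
The paper itself does not prove this lemma; it simply cites \cite[Proposition 3.1]{ChristWeinstein} for $1<q_1<\infty$ and \cite[Theorem A.6]{KenigPonceVega} for $q_1=\infty$. So there is no in-paper argument to compare against, and the only question is whether your sketch stands on its own.

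Your overall architecture---a difference square-function characterization of $\dot W^{\alpha,q}$, the fundamental theorem of calculus applied to $G$, H\"older's inequality, and the Hardy--Littlewood maximal function---is in the right neighbourhood, but the step you single out as ``the heart of the argument'' is false as written. You assert the pointwise bound
\[
\int_0^1 \bigl|G'\bigl((1-\tau)u(x)+\tau u(x+h)\bigr)\bigr|\,d\tau \ \lesssim\ M(|G'(u)|)(x)+M(|G'(u)|)(x+h).
\]
This cannot hold for a general $G\in C^1$: the left-hand side samples $G'$ along the segment in the \emph{target} joining $u(x)$ to $u(x+h)$, while the right-hand side only sees $G'$ restricted to the \emph{range of $u$}. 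In one dimension, take $G'$ to be a smooth bump supported in $[1,2]$ with $\int G'=1$, and let $u$ be smooth with $u\equiv 0$ on $(-\infty,0]$ and $u\equiv 3$ on $[1,\infty)$. For $x=-R$ and $h=2R$ with $R$ large, the segment $[0,3]$ crosses the support of $G'$ and the left-hand side equals $1/3$; but $G'\circ u$ is supported in the unit interval $[0,1]$, so $M(|G'(u)|)(\pm R)\lesssim R^{-1}$ and your inequality fails by a factor $\sim R$. The same construction also defeats the weaker pointwise square-function estimate your strategy would need: with your square function $\mathcal S_\alpha$, both $\mathcal S_\alpha(G(u))(-R)$ and $\mathcal S_\alpha(u)(-R)$ are of order $R^{-\alpha}$, whereas $M(|G'(u)|)(-R)\lesssim R^{-1}$, so $\mathcal S_\alpha(G(u))(x)\lesssim M(|G'(u)|)(x)\,\mathcal S_\alpha u(x)$ fails pointwise as well.

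This is precisely the obstruction you yourself flag (``no pointwise comparison between $G'$ at $u(x)$ and at the intermediate value is available''), but invoking an unspecified ``Christ--Weinstein averaging trick'' does not dissolve it: no spatial averaging of $|G'\circ u|$ can recover values of $G'$ taken \emph{off} the range of $u$. The genuine argument in \cite{ChristWeinstein} is more delicate and does not pass through any such pointwise domination of the $G'$ factor; as it stands your sketch has a real gap at the key step, and you should consult the cited references directly.
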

We refer the reader to \cite[Proposition 3.1]{ChristWeinstein} for the proof of the above estimate when $1<q_1<\infty$, and to \cite[Theorem A.6]{KenigPonceVega} for the proof when $q_1=\infty$. \newline
\indent When $G$ is no longer $C^1$, but H\"older continuous, we have the following fractional chain rule.
\begin{lem}\label{lem fractional chain rule holder}
Suppose that $G\in C^{0,\beta}(\C, \C), \beta \in (0,1)$. Then for every $0<\alpha <\beta, 1<q<\infty$, and $\frac{\alpha}{\beta}<\rho<1$,
\[
\||\nabla|^\alpha G(u)\|_{L^q_x} \lesssim \| |u|^{\beta-\frac{\alpha}{\rho}} \|_{L^{q_1}_x} \||\nabla|^\rho u\|^{\frac{\alpha}{\rho}}_{L^{\frac{\alpha}{\rho}q_2}_x},
\] 
provided $\frac{1}{q}=\frac{1}{q_1}+\frac{1}{q_2}$ and $\left(1-\frac{\alpha}{\beta \rho}\right)q_1>1$.
\end{lem}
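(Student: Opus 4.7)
The plan is to deduce the estimate from a difference-based characterization of $|\nabla|^\alpha$ combined with the Hölder property of $G$ and a careful interpolation of exponents, following the strategy used by Visan and collaborators for such fractional chain rules. Concretely, I would exploit the Triebel--Lizorkin equivalence
\[
\||\nabla|^\alpha f\|_{L^q_x} \sim \left\|\left(\int_{\R^d}\frac{|f(x+h)-f(x)|^2}{|h|^{d+2\alpha}}\,dh\right)^{1/2}\right\|_{L^q_x},
\]
valid for $0<\alpha<1$ and $1<q<\infty$, to reduce the problem to controlling the finite differences of $G(u)$ in a weighted norm.

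The Hölder hypothesis yields the pointwise bound $|G(u(x+h))-G(u(x))| \lesssim |u(x+h)-u(x)|^\beta$. The decisive step is the factorization
\[
|u(x+h)-u(x)|^\beta \leq |u(x+h)-u(x)|^{\alpha/\rho}\cdot\bigl(|u(x+h)|+|u(x)|\bigr)^{\beta-\alpha/\rho},
\]
which is permissible precisely because $\alpha/\beta<\rho<1$ keeps both exponents strictly positive and $\alpha/\rho<1$. The first factor, once inserted into the square function, is dominated by a Gagliardo-type quantity controlling $|\nabla|^\rho u$ raised to the power $\alpha/\rho$; the second is bounded pointwise by a Hardy--Littlewood maximal average of $|u|^{\beta-\alpha/\rho}$.

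Pulling the maximal average out of the $h$-integral and applying Hölder's inequality in $x$ with $\tfrac{1}{q}=\tfrac{1}{q_1}+\tfrac{1}{q_2}$ then produces the required product structure: the derivative factor naturally acquires the exponent $\tfrac{\alpha}{\rho}q_2$ through the scaling of the square function, while the remaining maximal-type term is estimated in $L^{q_1}$ by $\||u|^{\beta-\alpha/\rho}\|_{L^{q_1}_x}$. The technical condition $(1-\alpha/(\beta\rho))q_1>1$ is exactly the threshold needed for the vector-valued Fefferman--Stein maximal inequality to apply at this stage.

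The main obstacle is reconciling the natural $\beta$-power produced by the Hölder estimate with the $L^2_h$ structure of the Triebel--Lizorkin seminorm: the exponents simply do not match, and one cannot use the Christ--Weinstein argument of Lemma \ref{lem fractional chain} because $G$ is not $C^1$. This mismatch is resolved by the factorization above, followed by Minkowski's integral inequality and a square-function bound for the differences of $u$; an equivalent route uses a Littlewood--Paley decomposition of $G(u)$ and Bernstein-type comparisons of dyadic differences. In both presentations the strict inequalities $\alpha<\beta$ and $\alpha/\beta<\rho<1$ are essential, the former to keep the $\alpha$-derivative subcritical relative to the Hölder exponent, and the latter to split $\beta$ into two positive pieces compatible with a fractional Sobolev norm at level $\rho$.
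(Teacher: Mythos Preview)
The paper does not supply its own proof of this lemma: immediately after the statement it simply directs the reader to \cite[Proposition A.1]{Visanthesis}. So there is no in-paper argument to compare your proposal against; the only relevant benchmark is Visan's proof in the cited reference.

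Your sketch is in the right spirit and is essentially the strategy carried out there. A couple of remarks on accuracy. First, Visan's argument is written in Littlewood--Paley language rather than via the Gagliardo/Triebel--Lizorkin square function you start from: one decomposes $G(u)$ dyadically, uses the H\"older bound on differences at each scale, and then sums. You acknowledge this as an ``equivalent route'', which is fair, but if you were to flesh out the square-function version you would need to be more careful than your sketch suggests about passing from the $L^2_h$ integral of $|u(\cdot+h)-u(\cdot)|^{\alpha/\rho}$ to $\||\nabla|^\rho u\|^{\alpha/\rho}$; the exponent $\alpha/\rho$ sits inside the square function, so this is not a direct invocation of the Triebel--Lizorkin norm at level $\rho$. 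Second, your identification of the role of the hypothesis $(1-\alpha/(\beta\rho))q_1>1$ is correct in spirit: it is precisely what makes the maximal-function step for the low-regularity factor $|u|^{\beta-\alpha/\rho}$ go through, since one needs the relevant Lebesgue exponent to exceed $1$.

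In short: the proposal is a plausible outline of the standard proof, the paper itself offers nothing beyond the citation, and your sketch matches the cited source up to the choice of Littlewood--Paley versus difference-quotient presentation.
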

The reader can find the proof of this result in \cite[Proposition A.1]{Visanthesis}.
\subsection{Strichartz estimates} \label{subsection fractional derivative}
Let $I \subset \R$ and $p, q \in [1,\infty]$. We define the mixed norm
\[
\|u\|_{L^p_t(I, L^q_x)} := \Big( \int_I \Big( \int_{\R^d} |u(t,x)|^q dx \Big)^{\frac{1}{q}} \Big)^{\frac{1}{p}}
\] 
with a usual modification when either $p$ or $q$ are infinity. When there is no risk of confusion, we may write $L^p_t L^q_x$ instead of $L^p_t(I,L^q_x)$. We also use $L^p_{t,x}$ when $p=q$.
\begin{defi}
A pair $(p,q)$ is said to be \textbf{Schr\"odinger admissible}, for short $(p,q) \in S$, if 
\[
(p,q) \in [2,\infty]^2, \quad (p,q,d) \ne (2,\infty,2), \quad \frac{2}{p}+\frac{d}{q} \leq \frac{d}{2}.
\]
\end{defi}
We also denote for $(p,q)\in [1,\infty]^2$,
\begin{align}
\gamma_{p,q}=\frac{d}{2}-\frac{d}{q}-\frac{4}{p}. \label{define gamma pq}
\end{align}
\begin{defi}
A pair $(p,q)$ is called \textbf{biharmonic admissible}, for short $(p,q)\in B$, if 
\[
(p,q) \in S, \quad \gamma_{p,q}=0.
\]
\end{defi}
\begin{prop}[Strichartz estimate for fourth-order Schr\"odinger equation \cite{Dinhfract}] \label{prop strichartz}
Let $\gamma \in \R$ and $u$ be a (weak) solution to the linear fourth-order Schr\"odinger equation namely
\[
u(t)= e^{it\Delta^2}u_0 + \int_0^t e^{i(t-s)\Delta^2} F(s) ds,
\]
for some data $u_0, F$. Then for all $(p,q)$ and $(a,b)$ Schr\"odinger admissible with $q<\infty$ and $b<\infty$,
\begin{align}
\||\nabla|^\gamma u\|_{L^p_t(\R, L^q_x)} \lesssim \||\nabla|^{\gamma+\gamma_{p,q}} u_0\|_{L^2_x} + \||\nabla|^{\gamma+\gamma_{p,q}-\gamma_{a',b'} -4} F\|_{L^{a'}_t(\R, L^{b'}_x)}. \label{strichartz estimate}
\end{align}
Here $(a,a')$ and $(b,b')$ are conjugate pairs, and $\gamma_{p,q}, \gamma_{a',b'}$ are defined as in $(\ref{define gamma pq})$.
\end{prop}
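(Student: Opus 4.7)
The plan is to prove Proposition \ref{prop strichartz} in three stages: first, a dispersive estimate for $e^{it\Delta^2}$; second, the abstract Keel--Tao theorem to produce an $L^2$-based Strichartz estimate at the sharp \emph{biharmonic admissible} level $\gamma_{p,q}=0$; third, Sobolev embedding (together with duality for the inhomogeneous piece) to extend to the full Schr\"odinger admissible range and to account for the derivative shifts $\gamma+\gamma_{p,q}-\gamma_{a',b'}-4$ appearing in \eqref{strichartz estimate}.

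For step one, the kernel of $e^{it\Delta^2}$ is $K_t(x)=(2\pi)^{-d}\int_{\R^d}e^{i(x\cdot\xi+t|\xi|^4)}d\xi$; the rescaling $\xi\mapsto|t|^{-1/4}\eta$ reduces it to $|t|^{-d/4}\Phi(x/|t|^{1/4})$ with $\Phi(y):=\int e^{i(y\cdot\eta\pm|\eta|^4)}\,d\eta$ a fixed function bounded uniformly in $y$ (stationary phase controls $|y|\gtrsim 1$ using that the Hessian of $|\eta|^4$ is non-degenerate off the origin, while a direct argument handles the region $|y|\lesssim 1$). Hence
\[
\|e^{it\Delta^2}f\|_{L^\infty_x}\lesssim |t|^{-d/4}\|f\|_{L^1_x},
\]
which, together with the unitarity $\|e^{it\Delta^2}f\|_{L^2_x}=\|f\|_{L^2_x}$, places us inside the Keel--Tao framework with dispersive exponent $\sigma=d/4$. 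The abstract theorem then yields, for every $(p,q),(a,b)\in B$ with $q,b<\infty$,
\[
\|e^{it\Delta^2}u_0\|_{L^p_t L^q_x}\lesssim \|u_0\|_{L^2_x},\qquad \Big\|\int_0^t e^{i(t-s)\Delta^2}F(s)\,ds\Big\|_{L^p_t L^q_x}\lesssim \|F\|_{L^{a'}_t L^{b'}_x},
\]
since the Keel--Tao admissibility $\frac{2}{p}+\frac{d/2}{q}=\frac{d}{4}$ is precisely $\frac{4}{p}+\frac{d}{q}=\frac{d}{2}$, i.e., $\gamma_{p,q}=0$.

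For step three, let $(p,q)\in S$ with $\gamma_{p,q}\geq 0$ and choose $\tilde q\leq q$ so that $(p,\tilde q)\in B$, i.e., $\frac{d}{\tilde q}-\frac{d}{q}=\gamma_{p,q}$. Sobolev embedding $\|g\|_{L^q_x}\lesssim\||\nabla|^{\gamma_{p,q}}g\|_{L^{\tilde q}_x}$, applied to $g=|\nabla|^\gamma e^{it\Delta^2}u_0$ and combined with the commutation of $|\nabla|^{\gamma+\gamma_{p,q}}$ through $e^{it\Delta^2}$, produces the homogeneous half of \eqref{strichartz estimate}. For the Duhamel term, the same Sobolev embedding on the $u$ side together with its dual $\||\nabla|^{-\gamma_{a,b}}g\|_{L^{\tilde b'}_x}\lesssim \|g\|_{L^{b'}_x}$ on the $F$ side (with $\tilde b$ chosen so that $(a,\tilde b)\in B$) upgrades the biharmonic inhomogeneous bound into $\||\nabla|^\gamma v\|_{L^p_t L^q_x}\lesssim \||\nabla|^{\gamma+\gamma_{p,q}+\gamma_{a,b}}F\|_{L^{a'}_t L^{b'}_x}$; the algebraic identity $\gamma_{a,b}=-\gamma_{a',b'}-4$, obtained directly from the definition \eqref{define gamma pq}, identifies the exponent with $\gamma+\gamma_{p,q}-\gamma_{a',b'}-4$, and the Christ--Kiselev lemma restores the retarded truncation $s<t$ (applicable because $p,a>2$ whenever $q,b<\infty$ in the admissible range). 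The principal obstacle is step one: verifying that the phase $|\xi|^4$, whose Hessian degenerates at the origin, nonetheless generates the clean global $|t|^{-d/4}$ decay with no additional frequency loss, so that no Littlewood--Paley decomposition is needed and the full homogeneous Strichartz estimate drops out of Keel--Tao in one shot.
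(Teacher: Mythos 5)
Your steps one and two are sound: the kernel bound $\|e^{it\Delta^2}\|_{L^1_x\to L^\infty_x}\lesssim |t|^{-d/4}$ does hold, and Keel--Tao with $\sigma=d/4$ produces exactly the biharmonic-admissible ($\gamma_{p,q}=0$) homogeneous and retarded estimates. The genuine gap is in step three. Sobolev embedding only lets you \emph{lose} derivatives, so from the biharmonic estimates you can only reach pairs with $\gamma_{p,q}\geq 0$ on the solution side and $\gamma_{a,b}\geq 0$ on the dual side --- and indeed you restrict yourself to precisely this case. But Proposition \ref{prop strichartz} is asserted for \emph{all} Schr\"odinger admissible pairs, and by the definition $(\ref{define gamma pq})$ every sharp Schr\"odinger admissible pair with $p<\infty$ has $\gamma_{p,q}=-\frac{2}{p}<0$: these cases encode a genuine gain of derivatives (a local-smoothing effect of the fourth-order flow) which cannot be extracted from the $|t|^{-d/4}$ dispersive bound plus Keel--Tao plus Sobolev, since that would require the reverse Sobolev inequality $\|g\|_{L^{\tilde b'}_x}\lesssim \||\nabla|^{\gamma_{a,b}}g\|_{L^{b'}_x}$ with $\gamma_{a,b}<0$. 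This omission is not peripheral to the paper: estimate $(\ref{strichartz estimate biharmonic 4order})$ is obtained from the proposition with $(a,b)=(2,\frac{2d}{d-2})$, for which $\gamma_{a,b}=-1$ and $\gamma_{a',b'}=-3$, so that two derivatives of the solution are controlled by only one derivative of the nonlinearity; this is exactly what drives the control of $Z_I$ and the commutator estimates in Section \ref{section almost conservation law}. So your argument proves a strictly weaker statement than the one claimed.

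The paper itself does not reprove the proposition; it invokes \cite[Proposition 2.1]{Dinhfract} and notes that the proof there proceeds by a scaling technique rather than by the global Ben-Artzi--Koch--Saut kernel estimate. That is the standard repair for the missing cases: localize to a dyadic frequency block $|\xi|\sim M$, where the rescaled propagator obeys a Schr\"odinger-like $|t|^{-d/2}$ dispersive estimate (the Hessian of $|\xi|^4$ is uniformly non-degenerate on the annulus), apply Keel--Tao there, and undo the scaling; this produces the frequency factor $M^{\gamma_{p,q}}$ (and $M^{-\gamma_{a',b'}-4}$ on the dual side) for every Schr\"odinger admissible pair, including those with negative $\gamma_{p,q}$, after which one sums the Littlewood--Paley pieces. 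Two smaller points: Keel--Tao already yields the retarded estimates for pairs of sharp $\sigma$-admissible exponents, so Christ--Kiselev is only needed for mixed pairs, and your justification ``$p,a>2$ whenever $q,b<\infty$'' is false --- e.g.\ $(2,\frac{2d}{d-4})$ is biharmonic admissible with $q<\infty$ for $d\geq 5$ --- so the case $a'=p=2$, which occurs in the paper's applications, needs a separate treatment rather than Christ--Kiselev.
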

Note that the estimate $(\ref{strichartz estimate})$ is exactly the one given in \cite{MiaoZhang}, \cite{Pausader} or \cite{Pausadercubic} where the author considered $(p,q)$ and $(a,b)$ are either sharp Schr\"odinger admissible, i.e.
\[
p, q \in [2,\infty]^2, \quad (p,q,d) \ne (2,\infty,2), \quad \frac{2}{p}+\frac{d}{q}=\frac{d}{2},
\]
or biharmonic admissible.
We refer the reader to \cite[Proposition 2.1]{Dinhfract} for the proof of Proposition $\ref{prop strichartz}$. The proof is based on the scaling technique instead of using a dedicate dispersive estimate of \cite{Ben-ArtziKochSaut} for the fundamental solution of the homogeneous fourth-order Schr\"odinger equation. \newline
\indent The following result is a direct consequence of $(\ref{strichartz estimate})$. 
\begin{coro} \label{coro strichartz}
Let $u$ be a (weak) solution to the linear fourth-order Schr\"odinger equation for some data $u_0, F$. Then for all $(p,q)$ and $(a,b)$ biharmonic admissible satisfying $q<\infty$ and $b<\infty$,
\begin{align}
\|u\|_{L^p_t(\R,L^q_x)} \lesssim \|u_0\|_{L^2_x} + \|F\|_{L^{a'}_t(\R,L^{b'}_x)}, \label{strichartz estimate biharmonic}
\end{align}
and
\begin{align}
\|\Delta u\|_{L^p_t(\R,L^q_x)} \lesssim \|\Delta u_0\|_{L^2_x} + \|\nabla F\|_{L^2_t(\R,L^{\frac{2d}{d+2}}_x)}. \label{strichartz estimate biharmonic 4order}
\end{align}
\end{coro}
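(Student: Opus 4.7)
The plan is to deduce both estimates from Proposition \ref{prop strichartz} by specializing the parameters $\gamma$ and the admissible pair $(a,b)$, and using the definition of biharmonic admissibility.

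First, I would compute $\gamma_{a',b'}$ when $(a,b)$ is biharmonic admissible. Since $\frac{4}{a}+\frac{d}{b}=\frac{d}{2}$, a short calculation gives
\[
\gamma_{a',b'} = \frac{d}{2}-\frac{d}{b'}-\frac{4}{a'} = \frac{d}{2} - d + \frac{d}{b} - 4 + \frac{4}{a} = -4.
\]
This is the key identity that makes the dual biharmonic endpoint absorb the four derivatives lost by the bilaplacian. For the first estimate $(\ref{strichartz estimate biharmonic})$, I apply $(\ref{strichartz estimate})$ with $\gamma=0$: since $\gamma_{p,q}=0$ (biharmonic admissibility of $(p,q)$) and $\gamma_{a',b'}=-4$, the exponent on the left is $0$, the exponent acting on $u_0$ is $0$, and the exponent acting on $F$ is $0+0-(-4)-4=0$. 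This yields exactly
\[
\|u\|_{L^p_tL^q_x}\lesssim \|u_0\|_{L^2_x}+\|F\|_{L^{a'}_tL^{b'}_x}.
\]

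For the second estimate $(\ref{strichartz estimate biharmonic 4order})$, I select the sharp Schr\"odinger admissible pair $(a,b)=(2,\frac{2d}{d-2})$, which is legitimate for $d\geq 5$. Its conjugate is $(a',b')=(2,\frac{2d}{d+2})$, and one checks $\gamma_{a',b'} = -3$ directly from $(\ref{define gamma pq})$. Then I apply $(\ref{strichartz estimate})$ with $\gamma=2$: the left-hand side becomes $\||\nabla|^2 u\|_{L^p_tL^q_x}$, the initial-data exponent is $2+\gamma_{p,q}=2$, and the forcing exponent is $2+0-(-3)-4=1$. This gives
\[
\||\nabla|^2 u\|_{L^p_tL^q_x}\lesssim \||\nabla|^2 u_0\|_{L^2_x}+\|\nabla F\|_{L^2_tL^{\frac{2d}{d+2}}_x}.
\]
Finally, since $1<q<\infty$ and $2<\infty$, I use the standard equivalence $\||\nabla|^2 v\|_{L^r_x}\sim \|\Delta v\|_{L^r_x}$ for $1<r<\infty$ (Calder\'on--Zygmund / Mikhlin multiplier theorem) on both sides to replace $|\nabla|^2$ by $\Delta$, yielding $(\ref{strichartz estimate biharmonic 4order})$.

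There is essentially no obstacle here, since the whole argument is bookkeeping of the scaling exponents $\gamma_{p,q}$ in Proposition \ref{prop strichartz}. The only mildly subtle point is verifying that the choice $(a,b)=(2,\frac{2d}{d-2})$ is Schr\"odinger admissible in the relevant dimensions and that $b'=\frac{2d}{d+2}<\infty$, which holds for $d\geq 3$, hence in particular for $d=5,6,7$ considered in the paper.
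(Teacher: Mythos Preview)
Your proposal is correct and is exactly the argument intended by the paper, which simply labels the corollary a ``direct consequence of $(\ref{strichartz estimate})$'' without spelling out the parameter bookkeeping. Your choices $\gamma=0$ with $(a,b)$ biharmonic for $(\ref{strichartz estimate biharmonic})$ and $\gamma=2$ with the sharp Schr\"odinger pair $(a,b)=(2,\tfrac{2d}{d-2})$ for $(\ref{strichartz estimate biharmonic 4order})$, together with the Mikhlin/Riesz equivalence $\||\nabla|^2 v\|_{L^r}\sim\|\Delta v\|_{L^r}$ and $\||\nabla| F\|_{L^r}\sim\|\nabla F\|_{L^r}$ for $1<r<\infty$, are precisely what is needed.
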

\subsection{Littlewood-Paley decomposition}
Let $\varphi$ be a radial smooth bump function supported in the ball $|\xi|\leq 2$ and  equal to 1 on the ball $|\xi|\leq 1$.  For $M=2^k, k \in \Z$, we define the Littlewood-Paley operators:
\begin{align*}
\widehat{P_{\leq M} f}(\xi) &:= \varphi(M^{-1}\xi) \hat{f}(\xi), \\
\widehat{P_{>M} f}(\xi) &:= (1-\varphi(M^{-1}\xi)) \hat{f}(\xi), \\
\widehat{P_M f}(\xi) &:= (\varphi(M^{-1} \xi) - \varphi(2M^{-1}\xi)) \hat{f}(\xi),
\end{align*}
where $\hat{\cdot}$ is the spatial Fourier transform. Similarly, we can define
\[
P_{<M} := P_{\leq M}-P_M, \quad P_{\geq M} := P_{>M}+ P_M,
\] 
and for $M_1 \leq M_2$,
\[
P_{M_1 < \cdot \leq M_2}:= P_{\leq M_2} - P_{\leq M_1} = \sum_{M_1 < M \leq M_2} P_M.
\]
We recall the following standard Bernstein inequalities (see e.g. \cite[Chapter 2]{BCDfourier} or \cite[Appendix]{Tao}):
\begin{lem}[Bernstein inequalities] \label{lem bernstein}
Let $\gamma\geq 0$ and $1 \leq p \leq q \leq \infty$. We have
\begin{align*}
\|P_{\geq M} f\|_{L^p_x} &\lesssim M^{-\gamma} \||\nabla|^\gamma P_{\geq M} f\|_{L^p_x}, \\
\|P_{\leq M} |\nabla|^\gamma f\|_{L^p_x} &\lesssim M^\gamma \|P_{\leq M} f\|_{L^p_x}, \\
\|P_M |\nabla|^{\pm \gamma} f\|_{L^p_x} & \sim M^{\pm \gamma} \|P_M f\|_{L^p_x}, \\
\|P_{\leq M} f\|_{L^q_x} &\lesssim M^{\frac{d}{p}-\frac{d}{q}} \|P_{\leq M} f\|_{L^p_x}, \\
\|P_M f\|_{L^q_x} &\lesssim M^{\frac{d}{p}-\frac{d}{q}} \|P_M f\|_{L^p_x}.
\end{align*}
\end{lem}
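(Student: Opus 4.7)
The unifying idea is that each Littlewood-Paley projection acts as convolution with a Schwartz kernel whose $L^r$-norm scales with $M$ by a clean rule: if $m \in \mathcal{S}(\R^d)$ and $m_M(\xi) := m(M^{-1}\xi)$, then $(m_M)^\vee(x) = M^d m^\vee(Mx)$ and hence $\|(m_M)^\vee\|_{L^r} = M^{d(1-1/r)}\|m^\vee\|_{L^r}$ for every $r \in [1,\infty]$. To exploit this I would introduce fattened cutoffs $\tilde{\varphi}$, $\tilde{\psi}$, $\tilde{\chi}$ that are smooth and equal to $1$ on the Fourier supports of $\varphi$, $\varphi(\cdot)-\varphi(2\cdot)$, and $1-\varphi$ respectively, so that $P_{\leq M}$, $P_M$, and $P_{\geq M}$ are each reproduced by a slightly thicker multiplier of the same dyadic scale.

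For the two $L^p \to L^q$ inequalities I would apply Young's convolution inequality $\|K \ast f\|_{L^q} \leq \|K\|_{L^r}\|f\|_{L^p}$ with $\tfrac{1}{r} = 1 + \tfrac{1}{q} - \tfrac{1}{p}$, taking $K$ to be the kernel of $\tilde{\varphi}(M^{-1}D)$ or $\tilde{\psi}(M^{-1}D)$. The scaling identity above gives $\|K\|_{L^r} \sim M^{d(1-1/r)} = M^{d/p - d/q}$, which is the required factor. For the low-frequency and annular differentiation inequalities, $\|P_{\leq M}|\nabla|^\gamma f\|_{L^p_x} \lesssim M^\gamma\|P_{\leq M} f\|_{L^p_x}$ and its $P_M$ analogue, I would factor $P_{\leq M}|\nabla|^\gamma = M^\gamma m_M(D) P_{\leq M}$ with $m(\eta) = \tilde{\varphi}(\eta)|\eta|^\gamma$ smooth and compactly supported, so $m^\vee \in \mathcal{S} \subset L^1$ and the result is $L^p$-boundedness of convolution against an $L^1$ kernel.

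The one step that needs a separate argument is the high-frequency inequality $\|P_{\geq M} f\|_{L^p_x} \lesssim M^{-\gamma}\||\nabla|^\gamma P_{\geq M} f\|_{L^p_x}$, since the relevant symbol $m(\eta) = \tilde{\chi}(\eta)|\eta|^{-\gamma}$ is smooth but only decays like $|\eta|^{-\gamma}$ at infinity, so one must actually verify that $m^\vee \in L^1$. To do this I would dyadically decompose $m = \sum_{k \geq 0} \phi(2^{-k}\eta)|\eta|^{-\gamma}$ with $\phi$ a smooth bump on an annulus, rewrite each summand as $2^{-k\gamma}\,g(2^{-k}\eta)$ for the fixed Schwartz function $g(\eta) = \phi(\eta)|\eta|^{-\gamma}$, and conclude from the dilation rule that the $k$-th piece has inverse Fourier transform with $L^1$-norm $2^{-k\gamma}\|g^\vee\|_{L^1}$; the series then converges because $\gamma > 0$ (the $\gamma = 0$ case is trivial). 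Writing $P_{\geq M} f = M^{-\gamma}(m_M)^\vee \ast |\nabla|^\gamma P_{\geq M} f$ finishes the bound, and the two-sided estimate $\|P_M|\nabla|^{\pm\gamma} f\|_{L^p_x} \sim M^{\pm\gamma}\|P_M f\|_{L^p_x}$ is obtained by applying the one-sided bound in each direction. The only mild obstacle is thus the convergence of this dyadic $L^1$-sum; everywhere else the proof reduces to Young's inequality plus Fourier scaling.
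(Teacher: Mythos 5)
The paper offers no proof of Lemma \ref{lem bernstein} at all; it simply cites \cite[Chapter 2]{BCDfourier} and \cite[Appendix]{Tao}, so your argument is compared against the standard proof rather than an in-paper one. Your overall strategy is exactly that standard proof: reproduce each projection by a fattened multiplier at the same dyadic scale, use the dilation identity $\|(m(M^{-1}\cdot))^\vee\|_{L^r}=M^{d(1-1/r)}\|m^\vee\|_{L^r}$ together with Young's inequality for the $L^p\to L^q$ bounds, and reduce the differentiation bounds to $L^1$-boundedness of a convolution kernel. Your treatment of the high-frequency estimate is the right one, including the observation that the dyadic sum $\sum_k 2^{-k\gamma}$ is where $\gamma>0$ enters.

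One claim does need repair. For the low-frequency inequality you assert that $m(\eta)=\tilde{\varphi}(\eta)|\eta|^{\gamma}$ is smooth and compactly supported, hence $m^\vee\in\mathcal{S}\subset L^1$. That is false in general: $|\eta|^{\gamma}$ is not $C^\infty$ at $\eta=0$ unless $\gamma$ is an even integer, so $m^\vee$ need not be Schwartz and the one-line conclusion does not stand as written. The statement itself is still fine because all you need is $m^\vee\in L^1$, and this follows from the very dyadic-scaling device you describe for the high-frequency symbol, now run over the small scales: decompose $m(\eta)=\sum_{k\lesssim 1}\phi(2^{-k}\eta)\,\tilde{\varphi}(\eta)|\eta|^{\gamma}$, observe that each piece is $2^{k\gamma}$ times a rescaled copy of the fixed bump $g(\eta)=\phi(\eta)|\eta|^{\gamma}\in C_c^\infty$ (for all but finitely many $k$ the factor $\tilde{\varphi}$ is identically $1$ there), so its kernel has $L^1$-norm $\lesssim 2^{k\gamma}$, and sum the geometric series using $\gamma>0$, the case $\gamma=0$ being trivial. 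By contrast, the annular symbols $\tilde{\psi}(\eta)|\eta|^{\pm\gamma}$ are supported away from the origin, so they genuinely are $C_c^\infty$; your two-sided $P_M$ estimate and the Young-inequality steps go through unchanged. With that one patch, your proof is complete and self-contained, which is more than the paper itself provides.
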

\subsection{$I$-operator}
Let $0\leq \gamma <2$ and $N\gg 1$. We define the Fourier multiplier $I_N$ by
\[
\widehat{I_N f}(\xi):= m_N(\xi) \hat{f}(\xi),
\]
where $m_N$ is a smooth, radially symmetric, non-increasing function such that 
\begin{align*}
m_N(\xi) := \left\{ 
\begin{array}{cl}
1 &\text{if } |\xi|\leq N, \\
(N^{-1}|\xi|)^{\gamma-2} & \text{if } |\xi| \geq 2N.
\end{array}
\right. 
\end{align*}
We shall drop the $N$ from the notation and write $I$ and $m$ instead of $I_N$ and $m_N$. We collect some basic properties of the $I$-operator in the following lemma.
\begin{lem} \label{lem properties I operator}
Let $0\leq \sigma \leq \gamma<2$ and $1<q<\infty$. Then
\begin{align}
\|I f\|_{L^q_x} &\lesssim \|f\|_{L^q_x}, \label{property 1} \\
\| |\nabla|^\sigma P_{>N} f\|_{L^q_x} &\lesssim N^{\sigma-2} \|\Delta I f\|_{L^q_x}, \label{property 2} \\
\|\scal{\nabla}^\sigma f\|_{L^q_x} &\lesssim \|\scal{\Delta} I f\|_{L^q_x}, \label{property 3} \\
\|f\|_{H^\gamma_x} \lesssim \|If\|_{H^2_x} &\lesssim N^{2-\gamma} \|f\|_{H^\gamma_x}, \label{property 4} \\
\|If\|_{\dot{H}^2_x} &\lesssim N^{2-\gamma} \|f\|_{\dot{H}^\gamma_x}. \label{property 5}
\end{align}
\end{lem}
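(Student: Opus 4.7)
The plan is to derive all five estimates by purely Fourier-multiplier arguments, built on three standard ingredients: the Mikhlin multiplier theorem, Bernstein's inequality (Lemma $\ref{lem bernstein}$), and the Littlewood--Paley decomposition. The key structural fact is that $m_N$ is smooth, radial, monotone nonincreasing, equals $1$ on $|\xi|\leq N$ and $(N^{-1}|\xi|)^{\gamma-2}$ on $|\xi|\geq 2N$, so it satisfies $|\xi|^{|\alpha|}|\partial^\alpha m_N(\xi)|\lesssim 1$ uniformly in $N$. Mikhlin's theorem then gives $(\ref{property 1})$ on $L^q$ for $1<q<\infty$.

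For $(\ref{property 2})$ I write the inequality as $L^q$-boundedness of the Fourier multiplier $M(\xi) := |\xi|^\sigma(1-\varphi(N^{-1}\xi))/(|\xi|^2 m_N(\xi))$. Rescaling $\xi\mapsto N\xi$ factors $M$ as $N^{\sigma-2}$ times the symbol $|\xi|^{\sigma-2}(1-\varphi(\xi))/m_N(N\xi)$, and since $m_N(N\xi)^{-1}$ coincides with $|\xi|^{2-\gamma}$ on $|\xi|\geq 2$ and is smooth elsewhere on the support, and $\sigma\leq \gamma$, this rescaled symbol satisfies Mikhlin's hypothesis \emph{uniformly} in $N$. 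Estimate $(\ref{property 3})$ then follows by splitting $f = P_{\leq N}f + P_{>N}f$: on low frequencies $I$ is the identity, so applying the standard multiplier bound $\|\scal{\nabla}^\sigma g\|_{L^q_x}\lesssim \|\scal{\Delta}g\|_{L^q_x}$ (valid for $\sigma\leq 2$, $1<q<\infty$) to $g=P_{\leq N}If$ gives the desired control; on high frequencies, combining $(\ref{property 2})$ with exponents $\sigma$ and $0$ yields $\|\scal{\nabla}^\sigma P_{>N}f\|_{L^q_x}\lesssim (N^{-2}+N^{\sigma-2})\|\Delta If\|_{L^q_x}\lesssim \|\scal{\Delta}If\|_{L^q_x}$.

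Finally, $(\ref{property 4})$ splits into two bounds. The first inequality is $(\ref{property 3})$ applied with $\sigma=\gamma$ and $q=2$ together with $\|\scal{\Delta}If\|_{L^2_x}\sim \|If\|_{H^2_x}$. For the second inequality, and for $(\ref{property 5})$, I decompose dyadically in $L^2$. On each shell $|\xi|\sim M\leq N$, $I$ acts trivially and Bernstein gives $\||\nabla|^2 P_M f\|_{L^2_x}\sim M^2\|P_Mf\|_{L^2_x}\leq N^{2-\gamma}M^\gamma\|P_Mf\|_{L^2_x}$. On shells $M>N$ the symbol satisfies the pointwise bound $|\xi|^2 m_N(\xi)\lesssim N^{2-\gamma}|\xi|^\gamma$, so $\|\Delta IP_Mf\|_{L^2_x}\lesssim N^{2-\gamma}M^\gamma\|P_Mf\|_{L^2_x}$. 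Summing in $\ell^2$ via Plancherel delivers $(\ref{property 5})$, and combining with $(\ref{property 1})$ at $q=2$ to handle the $L^2$-piece yields the second inequality of $(\ref{property 4})$.

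No step is a genuine obstacle; the reasoning is entirely a multiplier-bookkeeping exercise. The only mildly delicate point is verifying the uniform-in-$N$ Mikhlin condition for $m_N$ and its rescaled relatives across the transition zone $N\leq |\xi|\leq 2N$, but this is immediate from the smooth monotone construction of $m_N$ and the hypothesis $\sigma\leq \gamma<2$.
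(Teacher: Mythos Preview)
Your proof is correct and follows essentially the same route as the paper: both arguments deduce $(\ref{property 1})$ and $(\ref{property 2})$ directly from the H\"ormander--Mikhlin multiplier theorem, obtain $(\ref{property 3})$ by the low/high frequency splitting $f=P_{\leq N}f+P_{>N}f$ combined with $(\ref{property 2})$, and then derive $(\ref{property 4})$--$(\ref{property 5})$ from these. The only cosmetic difference is that for the upper bounds in $(\ref{property 4})$ and $(\ref{property 5})$ the paper invokes the $L^2\!\to\! L^2$ operator norm of $P_{\leq N}I\scal{\nabla}^{2-\gamma}$ and $P_{>N}I\scal{\nabla}^{2-\gamma}$ directly, whereas you carry this out via an explicit dyadic decomposition and Plancherel; the two are equivalent.
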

\begin{proof}
The estimate $(\ref{property 1})$ is a direct consequence of the H\"ormander-Mikhlin multiplier theorem. To prove $(\ref{property 2})$, we write
\[
\||\nabla|^\sigma P_{>N}f\|_{L^q_x} = \| |\nabla|^\sigma P_{>N} (\Delta I)^{-1} \Delta I f\|_{L^q_x}.
\]
The desired estimate $(\ref{property 2})$ follows again from the H\"ormander-Mikhlin multiplier theorem. In order to get $(\ref{property 3})$, we estimate
\[
\|\scal{\nabla}^\sigma f\|_{L^q_x} \leq \|P_{\leq N} \scal{\nabla}^\sigma f\|_{L^q_x}+ \|P_{>N} f\|_{L^q_x}+  \|P_{>N} |\nabla|^\sigma f\|_{L^q_x}.
\]
Thanks to the fact that the $I$-operator is the identity at low frequency $|\xi|\leq N$, the multiplier theorem and $(\ref{property 2})$ imply
\[
\|\scal{\nabla}^\sigma f\|_{L^q_x} \lesssim \|\scal{\Delta} I f\|_{L^q_x} + \|\Delta I f\|_{L^q_x}.
\]
This proves $(\ref{property 3})$. Finally, by the definition of the $I$-operator and $(\ref{property 2})$, we have
\begin{align*}
\|f\|_{H^\gamma_x} &\lesssim \|P_{\leq N} f\|_{H^\gamma_x} + \|P_{>N} f\|_{L^2_x} + \| |\nabla|^\gamma P_{>N}f\|_{L^2_x} \\
&\lesssim \|P_{\leq N} I f\|_{H^\gamma_x} + N^{-2} \|\Delta I f\|_{L^2_x} + N^{\gamma-2} \|\Delta I f\|_{L^2_x} \lesssim \|If\|_{H^2_x}.
\end{align*}
This shows the first inequality in $(\ref{property 4})$. For the second inequality in $(\ref{property 4})$, we estimate
\[
\|I f\|_{H^2_x} \lesssim \|P_{\leq N}\scal{\nabla}^2If\|_{L^2_x} + \|P_{>N} \scal{\nabla}^2If\|_{L^2_x} \lesssim N^{2-\gamma}\|f\|_{H^\gamma_x}.
\]
Here we use the definition of $I$-operator to get
\[
\|P_{\leq N} I \scal{\nabla}^{2-\gamma}\|_{L^2_x \rightarrow L^2_x}, \quad \|P_{>N} I \scal{\nabla}^{2-\gamma}\|_{L^2_x \rightarrow L^2_x} \lesssim N^{2-\gamma}.
\]
The estimate $(\ref{property 5})$ is proved as for the second estimate in $(\ref{property 4})$. The proof is complete.
\end{proof}
When the nonlinearity $F(u)$ is algebraic, one can use the Fourier transform to write the commutator like $F(Iu)-IF(u)$ as a product of Fourier transforms of $u$ and $Iu$, and then measure the frequency interactions. However, in our setting, the nonlinearity is no longer algebraic, we thus need the following rougher estimate which is a modified version of the Schr\"odinger context (see \cite{VisanZhang}).
\begin{lem} \label{lem rougher estimate}
Let $1<\gamma<2, 0<\delta <\gamma-1$ and $1<q, q_1, q_2 <\infty$ be such that $\frac{1}{q}=\frac{1}{q_1}+\frac{1}{q_2}$. Then
\begin{align}
\|I(fg)-(If)g\|_{L^q_x} \lesssim N^{-(2-\gamma+\delta)} \|If\|_{L^{q_1}_x} \|\scal{\nabla}^{2-\gamma+\delta} g\|_{L^{q_2}_x}.
\end{align}
\end{lem}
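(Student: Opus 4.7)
The plan is to follow the Visan--Zhang strategy \cite{VisanZhang} via a double Littlewood--Paley decomposition of $f$ and $g$ relative to the threshold $N$, combined with a Fourier-side analysis of the bilinear commutator symbol $\sigma(\xi, \eta) := m(\xi) - m(\xi - \eta)$. Write $g = g_{\mathrm{lo}} + g_{\mathrm{hi}}$ with $g_{\mathrm{lo}} := P_{\leq N/4}\, g$ and $g_{\mathrm{hi}} := P_{> N/4}\, g$, and split the commutator as
$I(fg) - (If) g = [I(fg_{\mathrm{hi}}) - (If) g_{\mathrm{hi}}] + [I(fg_{\mathrm{lo}}) - (If) g_{\mathrm{lo}}]$.
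These two pieces require different treatments: the first is handled crudely by exploiting the high frequency of $g$, while the second requires a delicate Fourier-side analysis using the smoothness of $m$.

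For the high-frequency piece of $g$, I would further decompose $f = f_{\mathrm{lo}} + f_{\mathrm{hi}}$ with $f_{\mathrm{lo}} := P_{\leq N/2} f$, $f_{\mathrm{hi}} := P_{>N/2} f$, so that $If_{\mathrm{lo}} = f_{\mathrm{lo}}$ and $\|f_{\mathrm{lo}}\|_{L^{q_1}_x} = \|P_{\leq N/2}(If)\|_{L^{q_1}_x} \lesssim \|If\|_{L^{q_1}_x}$ by $(\ref{property 1})$. The $f_{\mathrm{lo}}$ contributions are then closed by H\"older, the $L^q$-boundedness of $I$, and the Bernstein estimate $\|g_{\mathrm{hi}}\|_{L^{q_2}_x} \lesssim N^{-(2-\gamma+\delta)}\|\scal{\nabla}^{2-\gamma+\delta}g\|_{L^{q_2}_x}$. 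For the $f_{\mathrm{hi}} g_{\mathrm{hi}}$ contribution, one makes a dyadic decomposition in both arguments (blocks $M_f, M_g \gtrsim N$) and splits into subregimes $M_g \ll M_f$ and $M_g \gtrsim M_f$; in each subregime the symbol estimates $|\sigma| \lesssim (M_g/M_f) m(\xi - \eta)$ respectively $|\sigma| \lesssim m(\xi - \eta) \lesssim (N/M_f)^{2-\gamma}$ allow one to convert $\|f_{\mathrm{hi}}\|_{L^{q_1}_x}$ into $\|If\|_{L^{q_1}_x}$ via the identity $\|P_{M_f}f\|_{L^{q_1}_x} \sim m(M_f)^{-1} \|IP_{M_f}f\|_{L^{q_1}_x}$, and the resulting dyadic sums converge to give the required $N^{-(2-\gamma+\delta)}$ decay.

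For the low-frequency piece of $g$, the bilinear symbol $\sigma$ vanishes whenever $|\xi - \eta| \leq N/2$ (as $|\eta| \leq N/4$ and $m \equiv 1$ on $|\cdot|\leq N$), so only the high-frequency part of $f$ contributes. Dyadically decompose $f = \sum_{M \gtrsim N} P_M f$; on each block, the mean value theorem together with the pointwise estimate $|\nabla m(\zeta)| \lesssim |\zeta|^{-1} m(\zeta)$ (valid for $|\zeta| \gtrsim N$) gives $|\sigma(\xi, \eta)| \lesssim (|\eta|/M)\, m(\xi - \eta)$. A Coifman--Meyer-type bilinear multiplier estimate then yields
$\|I((P_M f) g_{\mathrm{lo}}) - (IP_M f) g_{\mathrm{lo}}\|_{L^q_x} \lesssim M^{-1} \|IP_M f\|_{L^{q_1}_x} \|\nabla g_{\mathrm{lo}}\|_{L^{q_2}_x}$.
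Summing over $M \gtrsim N$ via a Littlewood--Paley square function argument converts the $M^{-1}$ factor into an effective $N^{-1}\|If\|_{L^{q_1}_x}$, and Bernstein on $g_{\mathrm{lo}}$ produces $\|\nabla g_{\mathrm{lo}}\|_{L^{q_2}_x} \lesssim N^{\gamma - 1 - \delta}\|\scal{\nabla}^{2-\gamma+\delta}g\|_{L^{q_2}_x}$ (which requires $2-\gamma+\delta \in (0,1)$, guaranteed by the hypothesis $\delta < \gamma - 1$). Combining the two $N$-factors gives exactly $N^{-(2-\gamma+\delta)}$, as claimed.

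The main obstacle is the rigorous justification of the Coifman--Meyer bilinear multiplier bound on each dyadic block uniformly in $M$: one must verify the appropriate smoothness and scaling hypotheses on the localized symbol $\sigma(\xi, \eta)$ (the decay estimates on $\nabla m$ propagate to higher derivatives via $|\nabla^k m(\zeta)| \lesssim |\zeta|^{-k} m(\zeta)$), and then sum the dyadic estimates without logarithmic losses via a vector-valued square function estimate. A secondary technical point is the bookkeeping in the high-high regime of the $g_{\mathrm{hi}}$ piece, where several subcases must be treated separately and reassembled.
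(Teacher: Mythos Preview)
Your approach is the same Visan--Zhang strategy the paper uses, with only a cosmetic difference in how the cases are organized (you split $g$ into low/high relative to $N$ first, whereas the paper decomposes $g$ dyadically and then splits $f$ relative to each $g$-block). The low-$g$ piece, the $f_{\mathrm{lo}}g_{\mathrm{hi}}$ piece, and the $M_g\ll M_f$ subcase of $f_{\mathrm{hi}}g_{\mathrm{hi}}$ are all handled correctly, and Coifman--Meyer is the right tool.

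There is, however, one incorrect step. In the $M_g \gtrsim M_f$ subregime of the $f_{\mathrm{hi}}g_{\mathrm{hi}}$ piece, your symbol bound $|\sigma(\xi,\eta)| \lesssim m(\xi-\eta)$ fails when $M_g \sim M_f$: the output frequency $\xi$ can then be arbitrarily small (by cancellation between $\xi-\eta$ and $\eta$), so $m(\xi)=1$ while $m(\xi-\eta)\sim (N/M_f)^{2-\gamma}$ is small. Hence you cannot absorb the factor $m(M_f)^{-1}$ coming from $\|P_{M_f}f\|_{L^{q_1}_x}\sim m(M_f)^{-1}\|IP_{M_f}f\|_{L^{q_1}_x}$ into the symbol. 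The remedy, which is exactly what the paper does in its Term$_2$, is to drop the symbol estimate in this regime and use the crude bound
\[
\|I(P_{M_f}f\,P_{M_g}g)-(IP_{M_f}f)P_{M_g}g\|_{L^q_x} \lesssim \|P_{M_f}f\|_{L^{q_1}_x}\|P_{M_g}g\|_{L^{q_2}_x} \lesssim \Big(\frac{M_f}{N}\Big)^{2-\gamma}\|If\|_{L^{q_1}_x}\, M_g^{-(2-\gamma+\delta)}\||\nabla|^{2-\gamma+\delta}g\|_{L^{q_2}_x},
\]
then sum geometrically first over $N\lesssim M_f\lesssim M_g$ (giving $(M_g/N)^{2-\gamma}$) and then over $M_g\gtrsim N$ (giving $N^{-\delta}$), for the required total $N^{-(2-\gamma+\delta)}$. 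A minor additional remark: in the low-$g$ piece no square-function argument is needed; the trivial bound $\|IP_Mf\|_{L^{q_1}_x}\lesssim\|If\|_{L^{q_1}_x}$ together with the geometric series $\sum_{M\gtrsim N}M^{-1}\lesssim N^{-1}$ already suffices.
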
  
The proof is a slight modification of the one given in Lemma 2.5 of \cite{VisanZhang}. We thus only give a sketch of the proof.\newline
\textit{Sketch of the proof.} 
By the Littlewood-Paley decomposition, we write
\begin{align*}
I(fg)-(If)g &= I(f P_{\leq 1}g) - (If) P_{\leq 1}g + \sum_{M>1} [I(P_{\lesssim M}f P_Mg) - (IP_{\lesssim M}f) P_Mg] \\
&\mathrel{\phantom{=}} + \sum_{M>1} [I(P_{\gg M}fP_Mg)- (IP_{\gg M} f)P_Mg] \\
&= I(P_{\gtrsim N}f P_{\leq 1}g) - (I P_{\gtrsim N}f) P_{\leq 1}g + \sum_{M\gtrsim N} [I(P_{\lesssim M} f P_Mg) - (IP_{\lesssim M}f) P_Mg] \\
&\mathrel{\phantom{=}} + \sum_{M>1} [I(P_{\gg M}fP_Mg)- (IP_{\gg M}f)P_Mg] \\
&= \text{Term}_1 + \text{Term}_2 + \text{Term}_3.
\end{align*}
Here we use the definition of the $I$-operator to get
\[
I(P_{\ll N}f P_{\leq 1}g) = (I P_{\ll N}f) P_{\leq 1}g, \quad I(P_{\lesssim M}f P_Mg) = (IP_{\lesssim M}f) P_M g,
\]
for all $M \ll N$. \newline
\indent For the second term, using Lemma $\ref{lem bernstein}$ and Lemma $\ref{lem properties I operator}$, we estimate
\begin{align*}
\|I(P_{\lesssim M}f P_Mg) - (IP_{\lesssim M}f) P_Mg \|_{L^q_x} &\lesssim \|P_{\lesssim M}f\|_{L^{q_1}_x} \|P_Mg\|_{L^{q_2}_x}, \quad M \gtrsim N \\
&\lesssim \Big(\frac{M}{N}\Big)^{2-\gamma} \|If\|_{L^{q_1}_x} \|P_Mg\|_{L^{q_2}_x} \\
&\lesssim M^{-\delta} N^{-(2-\gamma)}\|If\|_{L^{q_1}_x} \||\nabla|^{2-\gamma+\delta} g\|_{L^{q_2}_x}.
\end{align*}
Summing over all $N\lesssim M \in 2^\Z$, we get
\[
\|\text{Term}_2\|_{L^q_x} \lesssim N^{-(2-\gamma+\delta)} \|If\|_{L^{q_1}_x} \||\nabla|^{2-\gamma+\delta} g\|_{L^{q_2}_x}.
\]
For the third term, we write
\begin{align*}
I(P_{\gg M} f P_M g)- (IP_{\gg M}f) P_M g &= \sum_{1\ll k \in \N} [ I(P_{2^kM}f P_Mg) - (I P_{2^kM} f) P_Mg] \\
&= \sum_{1\ll k \in \N \atop N\lesssim 2^k M} I(P_{2^k M} f P_M g) - (IP_{2^k M}f) P_Mg].
\end{align*}
We note that
\[
[I(P_{2^kM}f P_Mg)-(IP_{2^kM}f) P_Mg]\widehat{\ }(\xi) = \int_{\xi=\xi_1 +\xi_2} (m_N(\xi_1+\xi_2) -m_N(\xi_1)) \widehat{P_{2^kM} f}(\xi_1) \widehat{P_Mg}(\xi_2).
\]
For $|\xi_1|\sim 2^kM \gtrsim N$ and $|\xi_2|\sim M$, the mean value theorem implies
\[
|m_N(\xi_1+\xi_2)-m_N(\xi_2)| \lesssim |\nabla m_N(\xi_1)| |\xi_2| \lesssim 2^{-k} \Big( \frac{2^kM}{N}\Big)^{\gamma-2}.
\]
The Coifman-Meyer multiplier theorem (see e.g. \cite{CoifmanMeyer, CoifmanMeyer91}) then yields
\begin{align*}
\|I(P_{2^kM}f P_Mg)-(IP_{2^kM}f) P_M g\|_{L^q_x} &\lesssim 2^{-k} \Big( \frac{2^kM}{N}\Big)^{\gamma-2} \|P_{2^kM}f\|_{L^{q_1}_x} \|P_Mg\|_{L^{q_2}_x} \\
&\lesssim 2^{-k} M^{-(2-\gamma+\delta)} \|If\|_{L^{q_1}_x} \||\nabla|^{2-\gamma+\delta} g\|_{L^{q_2}_x}.
\end{align*}
By rewrite $2^{-k} M^{-(2-\gamma+\delta)}= 2^{-k(\gamma-1-\delta)} (2^kM)^{-(2-\gamma+\delta)}$, we sum over all $k\gg 1$ with $\gamma-1>\delta$ and $N \lesssim 2^kM$ to get
\[
\|\text{Term}_3\|_{L^q_x} \lesssim N^{-(2-\gamma+\delta)} \|If\|_{L^{q_1}_x} \||\nabla|^{2-\gamma+\delta} g\|_{L^{q_2}_x}.
\]
Finally, we consider the first term. It is proved by the same argument as for the third term. We estimate
\begin{align*}
\|\text{Term}_1\|_{L^q_x} &\lesssim \sum_{k\in \N, 2^k \gtrsim N} \|I(P_{2^k}f P_{\leq 1}g) - (IP_{2^k}f)P_{\leq 1}g\|_{L^q_x} \\
&\lesssim \sum_{k\in \N, 2^k \gtrsim N} 2^{-k} \|If\|_{L^{q_1}_x} \|g\|_{L^{q_2}_x} \\
&\lesssim N^{-1} \|If\|_{L^{q_1}_x} \|g\|_{L^{q_2}_x}.
\end{align*}
Note that the condition $\gamma-1>\delta$ ensures that $N^{-1} \lesssim N^{-(2-\gamma+\delta)}$. This completes the proof. 
\defendproof \newline
\indent As a direct consequence of Lemma $\ref{lem rougher estimate}$ with the fact that
\[
\nabla F(u) = \nabla u F'(u),
\]
we have the following corollary. Note that the $I$-operator commutes with $\nabla$.
\begin{coro} \label{coro rougher estimate}
Let $1<\gamma<2, 0<\delta<\gamma-1$ and $1<q, q_1, q_2<\infty$ be such that $\frac{1}{q}=\frac{1}{q_1}+\frac{1}{q_2}$. Then 
\begin{align}
\|\nabla I F(u)-(I\nabla u)F'(u)\|_{L^q_x} \lesssim N^{-(2-\gamma+\delta)} \|\nabla I u\|_{L^{q_1}_x} \|\scal{\nabla}^{2-\gamma+\delta} F'(u)\|_{L^{q_2}_x}. \label{rougher estimate 1} 
\end{align}
\end{coro}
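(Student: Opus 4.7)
The plan is to observe that this corollary is essentially a specialization of Lemma~\ref{lem rougher estimate} in which we take $f=\nabla u$ and $g=F'(u)$, after rewriting the quantity we want to estimate using the chain rule.

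First I would apply the chain rule $\nabla F(u)=F'(u)\nabla u$ and the fact that the Fourier multiplier $I$ commutes with the derivative $\nabla$ (both are Fourier multipliers). This gives
\[
\nabla I F(u)-(I\nabla u)F'(u)=I\bigl(\nabla u\cdot F'(u)\bigr)-(I\nabla u)F'(u).
\]
The right-hand side is now exactly of the form $I(fg)-(If)g$ with $f=\nabla u$ and $g=F'(u)$.

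Next I would invoke Lemma~\ref{lem rougher estimate} with these choices of $f,g$ and with the given exponents $q,q_1,q_2$ (whose hypotheses $1<q,q_1,q_2<\infty$ and $\frac{1}{q}=\frac{1}{q_1}+\frac{1}{q_2}$ match exactly those required by that lemma, and the range $1<\gamma<2$, $0<\delta<\gamma-1$ is also the same). This yields
\[
\|I(\nabla u\cdot F'(u))-(I\nabla u)F'(u)\|_{L^q_x}\lesssim N^{-(2-\gamma+\delta)}\|I\nabla u\|_{L^{q_1}_x}\|\scal{\nabla}^{2-\gamma+\delta}F'(u)\|_{L^{q_2}_x}.
\]
Finally, using $\|I\nabla u\|_{L^{q_1}_x}=\|\nabla I u\|_{L^{q_1}_x}$ (again because $I$ and $\nabla$ commute) gives the claimed bound~(\ref{rougher estimate 1}).

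There is no real obstacle here: the chain rule and the commutation of $I$ with $\nabla$ reduce everything to the commutator bound already established in Lemma~\ref{lem rougher estimate}. The only point to be careful about is making sure the hypotheses on $\gamma$, $\delta$, and the exponents $q_i$ transfer unchanged, and that $F'(u)\in L^{q_2}_x$ with the required fractional regularity in actual applications (which will later be handled via the fractional chain rule of Lemma~\ref{lem fractional chain rule holder} applied to the H\"older continuous symbol $F'$).
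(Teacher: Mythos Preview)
Your proposal is correct and matches the paper's own reasoning exactly: the paper also notes that the corollary follows directly from Lemma~\ref{lem rougher estimate} applied with $f=\nabla u$ and $g=F'(u)$, using the chain rule $\nabla F(u)=F'(u)\nabla u$ and the fact that $I$ commutes with $\nabla$. There is nothing to add.
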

\subsection{Interaction Morawetz inequality}
We end this section by recalling the interaction Morawetz inequality for the nonlinear fourth-order Schr\"odinger equation. This estimate was first established by Pausader in \cite{Pausadercubic} for $d\geq 7$. Later, Miao-Wu-Zhang in \cite{MiaoWuZhang} extended this interaction Morawetz estimate to $d\geq 5$. 
\begin{prop}[Interaction Morawetz inequality \cite{Pausadercubic}, \cite{MiaoWuZhang}]
Let $d\geq 5$, $J$ be a compact time interval and $u$ a solution to \emph{(NL4S)} on the spacetime slab $J\times \R^d$. Then we have the following a priori estimate:
\begin{align}
\||\nabla|^{-\frac{d-5}{4}} u\|_{L^4_t(J, L^4_x)} \lesssim \|u_0\|^{\frac{1}{2}}_{L^2_x} \|u\|^{\frac{1}{2}}_{L^\infty_t(J, \dot{H}^{\frac{1}{2}}_x)}. \label{interaction morawetz}
\end{align} 
\end{prop}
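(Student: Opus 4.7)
The plan is to adapt the interaction Morawetz strategy of Colliander--Keel--Staffilani--Takaoka--Tao (and its $\dot{H}^{1/2}$ refinement due to Planchon--Vega) to the biharmonic dispersion, following Pausader and Miao--Wu--Zhang. I would introduce a two-particle Morawetz action of the schematic form
\[
M(t) := \int_{\R^d}\int_{\R^d} |u(t,y)|^2\, \Phi(x-y)\,\rho_u(t,x)\,dx\,dy,
\]
where $\rho_u$ is a momentum-type density of $u$ and $\Phi$ is a radial weight tailored to the biharmonic operator, chosen morally as a Riesz potential so that $\Delta^2 \Phi$ is a nonnegative kernel of the correct homogeneity $|\cdot|^{-(d-5)}$. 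In $d\geq 7$ one can take $\Phi(x)=|x|$ as in Pausader; in the low dimensions $d=5,6$ the more delicate weight of Miao--Wu--Zhang is required.

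First I would differentiate $M(t)$ in time using $i\partial_t u + \Delta^2 u = -|u|^{8/d}u$ together with the local conservation laws for NL4S (mass and the appropriate fourth-order momentum). Repeated integration by parts in $x$ and $y$ gives a decomposition
\[
\dot M(t) = \mathrm{LIN}(t) + \mathrm{NL}(t),
\]
where $\mathrm{NL}(t)$ is the nonlinear contribution, manifestly nonnegative thanks to the defocusing sign, and $\mathrm{LIN}(t)$ is a quadratic form in $u$ whose kernel is a combination of derivatives of $\Phi$ evaluated at $x-y$.

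The heart of the argument is the pointwise positivity
\[
\mathrm{LIN}(t) \gtrsim \bigl\||\nabla|^{-(d-5)/4}u(t)\bigr\|_{L^4_x}^4,
\]
which pins down the target space-time norm. Granting this, integrating over $J$ yields $\||\nabla|^{-(d-5)/4}u\|_{L^4_t(J,L^4_x)}^4 \lesssim \sup_{t\in J}|M(t)|$. To close, I would estimate $|M(t)|$ by Cauchy--Schwarz and the $\dot{H}^{1/2}$-$\dot{H}^{-1/2}$ duality, combined with mass conservation, to obtain
\[
|M(t)| \lesssim \|u(t)\|_{L^2_x}^{2}\,\|u(t)\|_{\dot{H}^{1/2}_x}^{2} \leq \|u_0\|_{L^2_x}^{2}\,\|u\|_{L^\infty_t(J,\dot{H}^{1/2}_x)}^{2}.
\]
Taking fourth roots of the combined inequality recovers the stated bound.

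The main obstacle is the positivity of $\mathrm{LIN}(t)$. For the biharmonic operator, integration by parts generates numerous cross-terms mixing first and second derivatives of $u$ with second and fourth derivatives of $\Phi$, and isolating a manifestly nonnegative quadratic form requires a delicate algebraic rearrangement. Pausader's weight handles $d\geq 7$ by a direct computation, but in $d=5,6$ the kernel $\Delta^2 \Phi$ acquires a borderline sign or singularity near the diagonal $x=y$; the Miao--Wu--Zhang refinement is needed to absorb it. This is precisely where the restriction $d\geq 5$ enters and where essentially all the analytic difficulty of the estimate sits.
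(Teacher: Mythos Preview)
The paper does not actually prove this proposition: it is stated with attribution to \cite{Pausadercubic} (for $d\geq 7$) and \cite{MiaoWuZhang} (for the extension to $d\geq 5$), and the surrounding text only derives the downstream consequences $(\ref{interaction morawetz sobolev})$ and $(\ref{interaction morawetz interpolation})$ from it. So there is no in-paper argument to compare your attempt against.

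That said, your outline is a faithful high-level summary of the strategy in those references: a two-particle Morawetz action built from the mass density and a convex radial weight, time differentiation via the local mass/momentum identities for (NL4S), sign-definiteness of the defocusing nonlinear contribution, extraction of $\||\nabla|^{-(d-5)/4}u\|_{L^4_{t,x}}^4$ from the positive linear part, and the bound $|M(t)|\lesssim \|u_0\|_{L^2_x}^{2}\|u\|_{L^\infty_t\dot{H}^{1/2}_x}^{2}$ for the action itself. You have also correctly located the crux in the positivity of the linear term and the reason for the dimensional split $d\geq 7$ versus $d=5,6$. If you intend to write this out in full, be aware that the biharmonic local momentum law involves third-order spatial derivatives of $u$, so the integration-by-parts bookkeeping producing a nonnegative quadratic form is substantially heavier than in the Schr\"odinger case; those computations are precisely what the cited papers supply and what your sketch, as written, still defers.
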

We have from $(\ref{interaction morawetz})$ and the Sobolev embedding that
\begin{align}
\|u\|_{L^4_t(J,L^{\frac{4d}{2d-5}}_x)} \lesssim \|u_0\|^{\frac{1}{2}}_{L^2_x} \|u\|^{\frac{1}{2}}_{L^\infty_t(J, \dot{H}^{\frac{1}{2}}_x)}. \label{interaction morawetz sobolev}
\end{align}
By interpolating between $(\ref{interaction morawetz sobolev})$ and the trivial estimate 
\[
\|u\|_{L^\infty_t(J, L^{\frac{2d}{d-1}}_x)} \lesssim \|u\|_{L^\infty_t(J, \dot{H}^{\frac{1}{2}}_x)},
\]
we obtain
\begin{align*}
\|u\|_{L^{2(d-3)}_t(J, L^{\frac{2(d-3)}{d-4}}_x)} \lesssim \Big(\|u_0\|_{L^2_x} \|u\|_{L^\infty_t(J,\dot{H}^{\frac{1}{2}}_x)}\Big)^{\frac{1}{d-3}} \|u\|_{L^\infty_t(J,\dot{H}^{\frac{1}{2}}_x)}^{\frac{d-5}{d-3}}= \|u_0\|_{L^2_x}^{\frac{1}{d-3}} \|u\|^{\frac{d-4}{d-3}}_{L^\infty_t(J,\dot{H}^{\frac{1}{2}}_x)}. 
\end{align*}
Using Sobolev embedding in time, we get
\begin{align}
\|u\|_{M(J)}:=\|u\|_{L^{\frac{8(d-3)}{d}}_t (J, L^{\frac{2(d-3)}{d-4}}_x)} \lesssim |J|^{\frac{d-4}{8(d-3)}} \|u_0\|^{\frac{1}{d-3}}_{L^2_x} \|u\|^{\frac{d-4}{d-3}}_{L^\infty_t(J,\dot{H}^{\frac{1}{2}}_x)}. \label{interaction morawetz interpolation}
\end{align}
Here $\Big(\frac{8(d-3)}{d}, \frac{2(d-3)}{d-4}\Big)$ is a biharmonic admissible pair. 
\section{Almost conservation law} \label{section almost conservation law}
\setcounter{equation}{0}
For any spacetime slab $J\times \R^d$, we define
\begin{align}
Z_I(J):= \sup_{(p,q)\in B, q<\infty} \|\scal{\Delta} I u\|_{L^p_t(J, L^q_x)}. \nonumber
\end{align}
Let us start with the following commutator estimates.
\begin{lem} \label{lem commutator estimate}
Let $5\leq d\leq 7, 1<\gamma<2$ and $0<\delta <\gamma-1$. Then
\begin{align}
\|\nabla I F(u)-(I\nabla u) F'(u)\|_{L^2_t(J, L^{\frac{2d}{d+2}}_x)} &\lesssim N^{-(2-\gamma+\delta)} (Z_I(J))^{1+\frac{8}{d}}, \label{commutator estimate 1} \\
\|\nabla I F(u)\|_{L^2_t(J, L^{\frac{2d}{d+2}}_x)} &\lesssim \|u\|^{\frac{8}{d}}_{M(J)} Z_I(J) + N^{-(2-\gamma+\delta)}(Z_I(J))^{1+\frac{8}{d}}, \label{commutator estimate 2}
\end{align}
where $\|u\|_{M(J)}$ is given in $(\ref{interaction morawetz interpolation})$. In particular, 
\begin{align}
\|\nabla I F(u)\|_{L^2_t(J, L^{\frac{2d}{d+2}}_x)} \lesssim (Z_I(J))^{1+\frac{8}{d}}. \label{commutator estimate 3}
\end{align}
\end{lem}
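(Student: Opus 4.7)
The plan is to split
\[
\nabla I F(u) = F'(u)(I\nabla u) + R,\qquad R := I\bigl(F'(u)\nabla u\bigr) - F'(u)(I\nabla u),
\]
using that $I$ commutes with $\nabla$ and the chain rule $\nabla F(u) = F'(u)\nabla u$. The remainder $R$ is exactly the commutator controlled by Corollary \ref{coro rougher estimate}, so estimate (\ref{commutator estimate 1}) is really the $L^2_tL^{2d/(d+2)}_x$-norm of $R$. Estimate (\ref{commutator estimate 2}) will be obtained by adding a direct H\"older estimate of the main term $F'(u)(I\nabla u)$, tailored so that $F'(u)\sim|u|^{8/d}$ lands exactly in the Morawetz norm $\|\cdot\|_{M(J)}$, and (\ref{commutator estimate 3}) will follow from the comparison $\|u\|_{M(J)}\lesssim Z_I(J)$.

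For (\ref{commutator estimate 1}) I would fix $t$, apply Corollary \ref{coro rougher estimate} with a spatial H\"older split $\frac{d+2}{2d}=\frac{1}{q_1}+\frac{1}{q_2}$, and take $L^2_t$ via H\"older in time with $\frac{1}{r_1}+\frac{1}{r_2}=\frac{1}{2}$ to obtain
\[
\|R\|_{L^2_tL^{2d/(d+2)}_x}\lesssim N^{-(2-\gamma+\delta)}\,\|\nabla Iu\|_{L^{r_1}_tL^{q_1}_x}\,\|\scal{\nabla}^{2-\gamma+\delta}F'(u)\|_{L^{r_2}_tL^{q_2}_x}.
\]
I would choose $(r_1,q_1)$ so that Sobolev embedding $\|\nabla Iu\|_{L^{q_1}_x}\lesssim\|\Delta Iu\|_{L^{\tilde q_1}_x}$ with $\frac{1}{\tilde q_1}=\frac{1}{q_1}+\frac{1}{d}$ lands $(r_1,\tilde q_1)$ in $B$, giving $\|\nabla Iu\|_{L^{r_1}_tL^{q_1}_x}\lesssim Z_I(J)$. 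The restriction $d\in\{5,6,7\}$ ensures $8/d>1$, so $F'\in C^1$; since $2-\gamma+\delta<1$, Lemma \ref{lem fractional chain} then reduces $\|\scal{\nabla}^{2-\gamma+\delta}F'(u)\|_{L^{q_2}_x}$ to a product $\|u\|^{8/d-1}_{L^{\kappa_1}_x}\|\scal{\nabla}^{2-\gamma+\delta}u\|_{L^{\kappa_2}_x}$, which by Sobolev embedding and estimate (\ref{property 3}) of Lemma \ref{lem properties I operator} is bounded by $Z_I(J)^{8/d}$ in the appropriate spacetime norm.

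For the main term in (\ref{commutator estimate 2}) I would H\"older-split with $F'(u)\sim|u|^{8/d}$ placed in $L^{d-3}_tL^{d(d-3)/(4(d-4))}_x$:
\[
\|F'(u)(I\nabla u)\|_{L^2_tL^{2d/(d+2)}_x}\lesssim\|u\|^{8/d}_{M(J)}\,\|\nabla Iu\|_{L^{\mu}_tL^{\nu}_x},
\]
where H\"older forces $\mu=2(d-3)/(d-5)$ and $\nu=2d(d-3)/(d^2-9d+26)$. A direct check for each $d\in\{5,6,7\}$ shows that trading $\nabla$ for $\Delta$ by Sobolev yields a biharmonic admissible pair, so $\|\nabla Iu\|_{L^\mu_tL^\nu_x}\lesssim Z_I(J)$. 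Combining with (\ref{commutator estimate 1}) proves (\ref{commutator estimate 2}). Estimate (\ref{commutator estimate 3}) then follows at once, because the Morawetz pair $(8(d-3)/d,2(d-3)/(d-4))$ is itself biharmonic admissible, so
\[
\|u\|_{M(J)}\leq\|Iu\|_{M(J)}+\|P_{>N}u\|_{M(J)}\lesssim Z_I(J)+N^{-2}Z_I(J)\lesssim Z_I(J)
\]
by Lemma \ref{lem properties I operator}.

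The main obstacle is the exponent bookkeeping. One must verify simultaneously that (i) in the main-term estimate the H\"older split putting $F'(u)$ into the Morawetz norm is compatible with Sobolev embedding placing $\nabla Iu$ in a biharmonic admissible pair, and (ii) in the commutator estimate the analogous split closes after applying Lemma \ref{lem fractional chain} to $F'$ and estimate (\ref{property 3}) to each spacetime factor. Compatibility forces $d\geq 5$ (from the Morawetz/biharmonic match) and $d\leq 7$ (from the need $F'\in C^1$), which is exactly the range of the lemma.
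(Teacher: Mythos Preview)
Your proposal is correct and follows essentially the same route as the paper: the commutator $R$ is handled via Corollary~\ref{coro rougher estimate} together with Sobolev embedding and the fractional chain rule (Lemma~\ref{lem fractional chain}), while the main term is split by H\"older so that $F'(u)$ lands exactly in the Morawetz norm and $\nabla Iu$ is pushed by Sobolev into the biharmonic pair $\bigl(\frac{2(d-3)}{d-5},\frac{2d(d-3)}{d^2-7d+20}\bigr)$---indeed your exponents $\mu,\nu$ agree with the paper's after this Sobolev step. The only cosmetic difference is your derivation of $(\ref{commutator estimate 3})$ via the split $u=Iu+(1-I)u$, whereas the paper invokes $(\ref{property 3})$ directly; both give $\|u\|_{M(J)}\lesssim Z_I(J)$.
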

\begin{proof}
For simplifying the notation, we shall drop the dependence on the time interval $J$. 
We apply $(\ref{rougher estimate 1})$ with $q=\frac{2d}{d+2}, q_1=\frac{2d(d-3)}{d^2-9d+22}$ and $q_2=\frac{d(d-3)}{2(2d-7)}$ to get
\[
\|\nabla I F(u)-(I\nabla u) F'(u)\|_{L^{\frac{2d}{d+2}}_x} \lesssim N^{-(2-\gamma+\delta)} \|\nabla I u\|_{L^{\frac{2d(d-3)}{d^2-9d+22}}_x} \|\scal{\nabla}^{2-\gamma+\delta}F'(u)\|_{L^{\frac{d(d-3)}{2(2d-7)}}_x}.
\]
We then apply H\"older's inequality to have
\[
\|\nabla I F(u)-(I\nabla u) F'(u)\|_{L^2_tL^{\frac{2d}{d+2}}_x} \lesssim N^{-\alpha} \|\nabla I u\|_{L^{\frac{2(d-3)}{d-4}}_tL^{\frac{2d(d-3)}{d^2-9d+22}}_x} \|\scal{\nabla}^\alpha F'(u)\|_{L^{2(d-3)}_tL^{\frac{d(d-3)}{2(2d-7)}}_x},
\] 
where $\alpha=2-\gamma+\delta \in (0,1)$ by our assumptions. 
For the first factor in the right hand side, we use the Sobolev embedding to obtain
\begin{align}
\|\nabla I u\|_{L^{\frac{2(d-3)}{d-4}}_tL^{\frac{2d(d-3)}{d^2-9d+22}}_x} \lesssim \|\Delta I u\|_{L^{\frac{2(d-3)}{d-4}}_t L^{\frac{2d(d-3)}{d^2-7d+16}}_x} \lesssim Z_I, \label{estimate 1}
\end{align}
where $\Big(\frac{2(d-3)}{d-4}, \frac{2d(d-3)}{d^2-7d+16}\Big)$ is a biharmonic admissible pair. For the second factor, we estimate
\begin{align}
\|\scal{\nabla}^\alpha F'(u)\|_{L^{2(d-3)}_tL^{\frac{d(d-3)}{2(2d-7)}}_x} \lesssim \|F'(u)\|_{L^{2(d-3)}_tL^{\frac{d(d-3)}{2(2d-7)}}_x} + \||\nabla|^\alpha F'(u)\|_{L^{2(d-3)}_tL^{\frac{d(d-3)}{2(2d-7)}}_x}. \label{estimate commutator}
\end{align}
Since $F'(u)=O(|u|^{\frac{8}{d}})$, we use $(\ref{property 3})$ to have
\begin{align}
\|F'(u)\|_{L^{2(d-3)}_tL^{\frac{d(d-3)}{2(2d-7)}}_x} \lesssim \|u\|^{\frac{8}{d}}_{L^{\frac{16(d-3)}{d}}_tL^{\frac{4(d-3)}{2d-7}}_x} \lesssim Z_I^{\frac{8}{d}}, \label{estimate 2}
\end{align}
where $\Big(\frac{16(d-3)}{d}, \frac{4(d-3)}{2d-7}\Big)$ is biharmonic admissible. In order to treat the second term in $(\ref{estimate commutator})$, 
we apply Lemma $\ref{lem fractional chain}$ with $q=\frac{d(d-3)}{2(2d-7)}, q_1 = \frac{2d(d-3)}{-d^2+11d-26}$ and $q_2=\frac{2d(d-3)}{d^2-3d-2}$ to get
\[
\||\nabla|^\alpha F'(u)\|_{L^{2(d-3)}_tL^{\frac{d(d-3)}{2(2d-7)}}_x} \lesssim \|F''(u)\|_{L^{4(d-3)}_t L^{\frac{2d(d-3)}{-d^2+11d-26}}_x} \||\nabla|^\alpha u\|_{L^{4(d-3)}_t L^{\frac{2d(d-3)}{d^2-3d-2}}_x}.
\]
As $F''(u)=O(|u|^{\frac{8}{d}-1})$, we have
\begin{align}
\|F''(u)\|_{L^{4(d-3)}_t L^{\frac{2d(d-3)}{-d^2+11d-26}}_x} \lesssim \|u\|^{\frac{8}{d}-1}_{L^{\frac{4(8-d)(d-3)}{d}}_t L^{\frac{2(8-d)(d-3)}{-d^2+11d-26}}_x} \lesssim Z_I^{\frac{8}{d}-1}. \label{estimate 3}
\end{align}
Here $\Big( \frac{4(8-d)(d-3)}{d}, \frac{2(8-d)(d-3)}{-d^2+11d-26} \Big)$ is biharmonic admissible. Since $\Big(4(d-3), \frac{2d(d-3)}{d^2-3d-2} \Big)$ is also a biharmonic admissible, we have from $(\ref{property 3})$ that
\begin{align}
\||\nabla|^\alpha u\|_{L^{4(d-3)}_t L^{\frac{2d(d-3)}{d^2-3d-2}}_x} \lesssim Z_I. \label{estimate 4}
\end{align}
Note that $\alpha<1<\gamma$.
Collecting $(\ref{estimate 1}), (\ref{estimate 2}), (\ref{estimate 3})$ and $(\ref{estimate 4})$, we prove $(\ref{commutator estimate 1})$. \newline
\indent We now prove $(\ref{commutator estimate 2})$. We have from $(\ref{commutator estimate 1})$ and the triangle inequality that
\begin{align}
\|\nabla I F(u)\|_{L^2_t L^{\frac{2d}{d+2}}_x} \lesssim \|(\nabla I u) F'(u)\|_{L^2_t L^{\frac{2d}{d+2}}_x} + N^{-(2-\gamma+\delta)} Z_I^{1+\frac{8}{d}}. \label{estimate 6}
\end{align}
The H\"older inequality gives
\[
 \|(\nabla I u) F'(u)\|_{L^2_t L^{\frac{2d}{d+2}}_x} \lesssim \|\nabla I u\|_{L^{\frac{2(d-3)}{d-5}}_tL^{\frac{2d(d-3)}{d^2-9d+26}}_x} \|F'(u)\|_{L^{d-3}_t L^{\frac{d(d-3)}{4(d-4)}}_x}.
\]
We use the Sobolev embedding to estimate
\begin{align}
\|\nabla I u\|_{L^{\frac{2(d-3)}{d-5}}_tL^{\frac{2d(d-3)}{d^2-9d+26}}_x} \lesssim \|\Delta I u\|_{L^{\frac{2(d-3)}{d-5}}_t L^{\frac{2d(d-3)}{d^2-7d+20}}_x} \lesssim Z_I. \label{estimate 7}
\end{align}
Here $\Big(\frac{2(d-3)}{d-5}, \frac{2d(d-3)}{d^2-7d+20} \Big)$ is biharmonic admissible. Since $F'(u)=O(|u|^{\frac{8}{d}})$, we have
\begin{align}
\|F'(u)\|_{L^{d-3}_t L^{\frac{d(d-3)}{4(d-4)}}_x} \lesssim \|u\|^{\frac{8}{d}}_{L^{\frac{8(d-3)}{d}}_t L^{\frac{2(d-3)}{d-4}}_x} = \|u\|^{\frac{8}{d}}_{M}. \label{estimate 8}
\end{align}
Combining $(\ref{estimate 6}), (\ref{estimate 7})$ and $(\ref{estimate 8})$, we obtain $(\ref{commutator estimate 2})$. The estimate $(\ref{commutator estimate 3})$ follows directly from $(\ref{commutator estimate 2})$ and $(\ref{property 3})$. Note that $\Big(\frac{8(d-3)}{d}, \frac{2(d-3)}{d-4}\Big)$ is biharmonic admissible. The proof is complete.
\end{proof}
We are now able to prove the almost conservation law for the modified energy functional $E(Iu)$, where
\[
E(Iu(t))= \frac{1}{2}\|Iu(t)\|^2_{\dot{H}^2_x} + \frac{d}{2d+8}\|Iu(t)\|^{\frac{2d+8}{d}}_{L^{\frac{2d+8}{d}}_x}.
\]
\begin{prop}\label{prop almost conservation law}
Let $5\leq d\leq 7, \max\left\{3-\frac{8}{d},\frac{8}{d}\right\}<\gamma<2$ and $0<\delta<\gamma+\frac{8}{d}-3$. Assume that $u \in L^\infty([0,T],\mathscr{S}(\R^d))$ is a solution to \emph{(NL4S)} on a time interval $J=[0,T]$, and satisfies $\|Iu_0\|_{H^2_x}\leq 1$. Assume in addition that $u$ satisfies the a priori bound
\[
\|u\|_{M(J)} \leq \mu,
\] 
for some small constant $\mu>0$. Then, for $N$ sufficiently large,
\begin{align}
\sup_{t\in [0,T]} |E(Iu(t))-E(Iu_0)| \lesssim N^{-(2-\gamma+\delta)}. \label{almost conservation law}
\end{align}
Here the implicit constant depends only on the size fo $E(Iu_0)$.
\end{prop}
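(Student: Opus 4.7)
The strategy is to differentiate $E(Iu)$ in time, exploiting that $E$ would be conserved if $Iu$ solved (NL4S) exactly. Since $Iu$ satisfies $i I u_t + \Delta^2 I u = -I F(u)$, expanding $\frac{d}{ds} E(Iu)$ and using the equation, the terms that cancel for a true solution of (NL4S) reorganize into
\[
\frac{d}{ds} E(Iu(s)) = \operatorname{Im}\int_{\R^d} \bigl[\Delta^2\overline{Iu} + \overline{F(Iu)}\bigr]\bigl[F(Iu) - IF(u)\bigr]\,dx,
\]
so after time integration the energy defect is expressed entirely through the commutator $F(Iu) - IF(u)$, which vanishes at frequencies $\leq N$.

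\textbf{Step 1 (Bootstrap of $Z_I$).} Applying the Strichartz bounds $(\ref{strichartz estimate biharmonic})$ and $(\ref{strichartz estimate biharmonic 4order})$ to $Iu$, using $\|\scal{\Delta} I u_0\|_{L^2_x}\leq 1$, and invoking $(\ref{commutator estimate 2})$ together with the a priori bound $\|u\|_{M(J)}\leq\mu$, one obtains
\[
Z_I(J) \lesssim 1 + \mu^{8/d} Z_I(J) + N^{-(2-\gamma+\delta)} Z_I(J)^{1+\frac{8}{d}}.
\]
For $\mu$ small and $N$ sufficiently large, a continuity argument yields $Z_I(J)\lesssim 1$.

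\textbf{Step 2 (Main kinetic term).} Integrating in time and performing one spatial integration by parts on the $\Delta^2$ contribution produces
\[
-\int_0^t \operatorname{Im}\int\nabla\Delta\overline{Iu}\cdot\nabla\bigl[F(Iu) - IF(u)\bigr]\,dx\,ds.
\]
Pair via H\"older in $(t,x)$: $\nabla\Delta Iu\in L^2_t L^{2d/(d-2)}_x$ with $\nabla[F(Iu)-IF(u)]\in L^2_t L^{2d/(d+2)}_x$. The first factor is bounded by applying $(\ref{strichartz estimate})$ at regularity $3$ to $Iu$ (noting $\gamma_{2,2d/(d-2)}=-1$), giving
\[
\|\nabla\Delta Iu\|_{L^2_t L^{2d/(d-2)}_x} \lesssim \|Iu_0\|_{\dot H^2_x} + \|\nabla IF(u)\|_{L^2_t L^{2d/(d+2)}_x} \lesssim 1,
\]
by $(\ref{commutator estimate 3})$ and Step 1. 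For the second factor, using $\nabla Iu = I\nabla u$, we split
\[
\nabla\bigl[F(Iu) - IF(u)\bigr] = -\bigl[\nabla IF(u) - (I\nabla u) F'(u)\bigr] + (I\nabla u)\bigl[F'(u) - F'(Iu)\bigr].
\]
The first bracket is controlled by Corollary~$\ref{coro rougher estimate}$ (via $(\ref{rougher estimate 1})$) to give $N^{-(2-\gamma+\delta)} Z_I^{1+8/d}\lesssim N^{-(2-\gamma+\delta)}$. The second bracket is treated via the pointwise bound $|F'(u) - F'(Iu)|\lesssim(|u|^{\frac{8}{d}-1}+|Iu|^{\frac{8}{d}-1})|u - Iu|$ (valid since $8/d>1$ for $d\leq 7$) combined with the smoothing $\|u - Iu\|_{L^q_x}\lesssim N^{-2}\|\Delta Iu\|_{L^q_x}$ from $(\ref{property 2})$, yielding an $O(N^{-2})$ contribution that is absorbed.

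\textbf{Step 3 (Potential-energy term and main obstacle).} The contribution of $\overline{F(Iu)}[F(Iu) - IF(u)]$ is treated analogously via H\"older, the decomposition $F(Iu) - IF(u) = (F(Iu) - F(u)) + (1-I)F(u)$, the smoothing of $1-I$, and Sobolev embeddings, each summand contributing at worst $O(N^{-2})$. The principal technical obstacle is the cross-term $(I\nabla u)[F'(u) - F'(Iu)]$ in Step 2, since $(\ref{rougher estimate 1})$ addresses $\nabla IF(u) - (I\nabla u)F'(u)$ rather than the expression with $F'(Iu)$; bookkeeping this non-algebraic term through fractional chain rules is where the hypotheses $\gamma > 3 - 8/d$ and $\delta < \gamma + 8/d - 3$ enter, ensuring that the smoothing factor $N^{-(2-\gamma+\delta)}$ dominates after all Sobolev losses.
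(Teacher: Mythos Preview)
Your approach is essentially correct but differs from the paper's in an interesting way for the kinetic term. The paper integrates by parts \emph{twice} to produce $\int\overline{\Delta Iu}\,\Delta(F(Iu)-IF(u))$, then expands $\Delta(F(Iu)-IF(u))$ into five pieces involving $F''$; it is precisely the commutator term $(I\nabla u)\cdot\nabla u\,F''(u)-I(\nabla u\cdot\nabla u\,F''(u))$ that forces the H\"older fractional chain rule of Lemma~\ref{lem fractional chain rule holder}, and \emph{this} is where the constraints $\gamma>3-\tfrac{8}{d}$, $\gamma>\tfrac{8}{d}$, and $\delta<\gamma+\tfrac{8}{d}-3$ actually arise in the paper. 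Your single integration by parts, combined with the general Strichartz estimate $(\ref{strichartz estimate})$ to bound $\|\nabla\Delta Iu\|_{L^2_tL^{2d/(d-2)}_x}$ at the cost of only two derivatives on the data and one on the forcing, sidesteps this entirely: you never need to touch $F''$, and the estimates you invoke (Lemma~\ref{lem commutator estimate} and the pointwise Lipschitz bound on $F'$) only require $1<\gamma<2$ and $0<\delta<\gamma-1$. So your last paragraph misidentifies the source of the stronger hypotheses---in your own route they are not needed at all, and the term $(I\nabla u)[F'(u)-F'(Iu)]$ is harmless ($O(N^{-2})$), not the obstacle.

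Two points deserve tightening. In Step~1 you must also control $\|IF(u)\|_{L^2_tL^{2d/(d+4)}_x}$ for the undifferentiated part of $Z_I$ coming from $(\ref{strichartz estimate biharmonic})$; the paper does this in $(\ref{local estimate 3})$ by H\"older against the $M(J)$ norm, and it is equally easy here. In Step~3 your decomposition is workable, but the claim that $(1-I)F(u)$ contributes $O(N^{-2})$ is not justified: $(1-I)$ gains $N^{-\sigma}$ only against $|\nabla|^\sigma F(u)$, and you control at most $\sigma\le\gamma<2$ derivatives of $F(u)$. Since $2-\gamma+\delta<1$, already $O(N^{-1})$ suffices; alternatively, do what the paper does for this term and transfer one derivative to write
\[
\Big|\int \overline{IF(u)}\,(F(Iu)-IF(u))\,dx\Big|\lesssim \||\nabla|^{-1}IF(u)\|_{L^{2d/(d-2)}_x}\,\|\nabla(F(Iu)-IF(u))\|_{L^{2d/(d+2)}_x},
\]
reducing again to the quantity you already bounded in Step~2.
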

\begin{proof}
We again drop the notation $J$ for simplicity. Our first step is to control the size of $Z_I$. Applying $I$, $\Delta I $ to (NL4S), and then using Strichartz estimates $(\ref{strichartz estimate biharmonic}), (\ref{strichartz estimate biharmonic 4order})$, we have
\begin{align}
Z_I \lesssim \|I u_0\|_{H^2_x} + \|IF(u)\|_{L^2_t L^{\frac{2d}{d+4}}_x} + \|\nabla I F(u)\|_{L^2_t L^{\frac{2d}{d+2}}_x}. \label{local estimate 1}
\end{align}
Using $(\ref{commutator estimate 2})$, we have
\begin{align}
\|\nabla I F(u)\|_{L^2_t L^{\frac{2d}{d+2}}_x} \lesssim \|u\|^{\frac{8}{d}}_M Z_I + N^{-(2-\gamma+\delta)}Z_I^{1+\frac{8}{d}} \lesssim \mu^{\frac{8}{d}} Z_I + N^{-(2-\gamma+\delta)}Z_I^{1+\frac{8}{d}}. \label{local estimate 2}
\end{align}
We next drop the $I$-operator and use H\"older's inequality to estimate
\begin{align}
\|IF(u)\|_{L^2_t L^{\frac{2d}{d+4}}_x} &\lesssim \||u|^{\frac{8}{d}} \|_{L^{d-3}_t L^{\frac{d(d-3)}{4(d-4)}}_x} \|u\|_{L^{\frac{2(d-3)}{d-5}}_t L^{\frac{2d(d-3)}{d^2-7d+20}}_x} \nonumber \\
&\lesssim \|u\|^{\frac{8}{d}}_{L^{\frac{8(d-3)}{d}}_t L^{\frac{2(d-3)}{d-4}}_x} \|u\|_{L^{\frac{2(d-3)}{d-5}}_t L^{\frac{2d(d-3)}{d^2-7d+20}}_x} \nonumber\\
&\lesssim \|u\|^{\frac{8}{d}}_M Z_I \lesssim \mu^{\frac{8}{d}}Z_I. \label{local estimate 3}
\end{align}
The last inequality follows from $(\ref{property 3})$ and the fact $\Big(\frac{2(d-3)}{d-5}, \frac{2d(d-3)}{d^2-7d+20}\Big)$ is biharmonic admissible. Collecting from $(\ref{local estimate 1})$ to $(\ref{local estimate 3})$, we obtain
\[
Z_I \lesssim \|Iu_0\|_{H^2_x} + \mu^{\frac{8}{d}}Z_I + N^{-(2-\gamma+\delta)} Z_I^{1+\frac{8}{d}}.
\]
By taking $\mu$ sufficiently small and $N$ sufficiently large, the continuity argument gives 
\begin{align}
Z_I \lesssim \|Iu_0\|_{H^2_x} \leq 1. \label{control size Z_I}
\end{align}
\indent Next, we have from a direct computation that
\[
\partial_t E(Iu(t)) = \re{ \int \overline{I\partial_t u} (\Delta^2 Iu + F(Iu))  dx}.
\]
By the Fundamental Theorem of Calculus, 
\[
E(Iu(t))-E(Iu_0) = \int_0^t \partial_s E(Iu(s)) ds  = \re{ \int_0^t \int \overline{I \partial_s u} (\Delta^2 Iu + F(Iu)) dx ds}.  
\]
Using $I\partial_t u = i\Delta^2 Iu + i IF(u)$, we see that
\begin{align}
E(Iu(t))-E(Iu_0) &= \re{ \int_0^t \int \overline{I \partial_s u} (F(Iu)-IF(u)) dx ds} \nonumber \\
&= \im{ \int_0^t \int \overline{\Delta^2 Iu + IF(u)} (F(Iu)-IF(u)) dx ds} \nonumber \\
&= \im{ \int_0^t \int \overline{\Delta Iu} \Delta(F(Iu)-IF(u)) dx ds } \nonumber\\
&\mathrel{\phantom{=}} + \im{\int_0^t \int \overline{IF(u)} (F(Iu)-IF(u)) dx ds}. \nonumber 
\end{align}
We next write 
\begin{align*}
\Delta(F(Iu)- IF(u)) &= (\Delta I u) F'(Iu) + |\nabla I u|^2F''(Iu) - I(\Delta F'(u)) - I(|\nabla u|^2 F''(u)) \\
&= (\Delta I u) (F'(Iu)-F'(u)) + |\nabla I u|^2(F''(Iu)-F''(u))  + \nabla Iu \cdot (\nabla I u - \nabla u) F''(u)  \\
& \mathrel{\phantom{=}} + (\Delta I u) F'(u) -I(\Delta u F'(u)) + (I\nabla u)\cdot \nabla u F''(u) - I(\nabla u \cdot \nabla u F''(u)).
\end{align*}
Therefore,
\begin{align}
E(Iu(t))-E(Iu_0) &= \im{ \int_0^t \int \overline{\Delta Iu} \Delta I u (F'(Iu)-F'(u)) dx ds} \label{almost estimate 1} \\
& \mathrel{\phantom{=}} + \im{ \int_0^t \int \overline{\Delta Iu} |\nabla I u|^2(F''(Iu)-F''(u)) dx ds} \label{almost estimate 2} \\
& \mathrel{\phantom{=}} + \im{ \int_0^t \int \overline{\Delta Iu} \nabla Iu \cdot (\nabla I u - \nabla u) F''(u) dx ds} \label{almost estimate 3} \\
& \mathrel{\phantom{=}} + \im{ \int_0^t \int \overline{\Delta Iu} [(\Delta I u) F'(u) -I(\Delta u F'(u))] dx ds} \label{almost estimate 4} \\
& \mathrel{\phantom{=}} + \im{ \int_0^t \int \overline{\Delta Iu} [(I\nabla u)\cdot \nabla u F''(u) - I(\nabla u \cdot \nabla u F''(u))]dx ds} \label{almost estimate 5} \\
&\mathrel{\phantom{=}} + \im{\int_0^t \int \overline{IF(u)} (F(Iu)-IF(u)) dx ds} \label{almost estimate 6}.
\end{align}
Let us consider $(\ref{almost estimate 1})$. By H\"older's inequality, we estimate
\begin{align}
|(\ref{almost estimate 1})| &\lesssim \|\Delta I u\|^2_{L^4_t L^{\frac{2d}{d-2}}_x} \|F'(Iu)-F'(u)\|_{L^2_t L^{\frac{d}{2}}_x} \nonumber \\
& \lesssim Z_I^2 \| |Iu-u|(|Iu|+|u|)^{\frac{8}{d}-1} \|_{L^2_t L^{\frac{d}{2}}_x} \nonumber \\
& \lesssim Z_I^2 \|P_{>N} u\|_{L^{\frac{16}{d}}_t L^4_x} \|u\|^{\frac{8}{d}-1}_{L^{\frac{16}{d}}_t L^4_x}. \label{almost estimate 1 sub 1}
\end{align}
By $(\ref{property 2})$, we bound
\begin{align}
\|P_{>N} u\|_{L^{\frac{16}{d}}_t L^4_x} \lesssim N^{-2} \|\Delta I u\|_{L^{\frac{16}{d}}_t L^4_x} \lesssim N^{-2} Z_I, \label{almost estimate 1 sub 2}
\end{align}
where $\Big(\frac{16}{d}, 4 \Big)$ is biharmonic admissible. Similarly, we have from $(\ref{property 3})$ that
\begin{align}
\|u\|_{L^{\frac{16}{d}}_t L^4_x} \lesssim Z_I. \label{almost estimate 1 sub 3}
\end{align}
Combining $(\ref{almost estimate 1 sub 1})-(\ref{almost estimate 1 sub 3})$, we get
\begin{align}
|(\ref{almost estimate 1})| \lesssim N^{-2} Z_I^{2+\frac{8}{d}}. \label{almost estimate 1 final}
\end{align}
We next bound
\begin{align}
|(\ref{almost estimate 2})| &\lesssim \|\Delta I u\|_{L^4_t L^{\frac{2d}{d-2}}_x} \||\nabla I u|^2\|_{L^{\frac{16}{11}}_t L^{\frac{4d}{4d-11}}_x} \|F''(Iu)-F''(u)\|_{L^{16}_t L^{\frac{4d}{15-2d}}_x} \nonumber \\
& \lesssim \|\Delta I u\|_{L^4_t L^{\frac{2d}{d-2}}_x} \|\nabla I u\|^2_{L^{\frac{32}{11}}_t L^{\frac{8d}{4d-11}}_x} \|F''(Iu)-F''(u)\|_{L^{16}_t L^{\frac{4d}{15-2d}}_x} \nonumber \\
& \lesssim Z_I^3 \||Iu-u|^{\frac{8}{d}-1} \|_{L^{16}_t L^{\frac{4d}{15-2d}}_x } \nonumber \\
&\lesssim Z_I^3 \|P_{>N} u\|^{\frac{8}{d}-1}_{L^{\frac{16(8-d)}{d}}_t L^{\frac{4(8-d)}{15-2d}}_x} \nonumber\\
&\lesssim N^{-2\left(\frac{8}{d}-1\right)} Z_I^{2+\frac{8}{d}}. \label{almost estimate 2 final}
\end{align}
Here we drop the $I$-operator and apply $(\ref{property 3})$ with the fact $\gamma>1$ to get the third line. We also use the fact that for $5\leq d \leq 7$,
\[
|F''(z)-F''(\zeta)| \lesssim |z-\zeta|^{\frac{8}{d}-1}, \quad \forall z, \zeta \in \C.
\] 
The last estimate uses $(\ref{almost estimate 1 sub 2})$. Note that $\Big(\frac{32}{11}, \frac{8d}{4d-11}\Big)$ and $\Big(\frac{16(8-d)}{d}, \frac{4(8-d)}{15-2d}\Big)$ are biharmonic admissible. Similarly, we estimate
\begin{align*}
|(\ref{almost estimate 3})| &\lesssim \|\Delta I u\|_{L^4_t L^{\frac{2d}{d-2}}_x} \|\nabla I u\|_{L^{\frac{32}{11}}_t L^{\frac{8d}{4d-11}}_x} \|\nabla I u- \nabla u\|_{L^{\frac{32}{11}}_t L^{\frac{8d}{4d-11}}_x}  \|F''(u)\|_{L^{16}_t L^{\frac{4d}{15-2d}}_x} \nonumber \\
&\lesssim Z_I^2 \|\nabla P_{>N} u\|_{L^{\frac{32}{11}}_t L^{\frac{8d}{4d-11}}_x} \|F''(u)\|_{L^{16}_t L^{\frac{4d}{15-2d}}_x}.
\end{align*}
We next use $(\ref{property 2})$ to have
\[
\|\nabla P_{>N} u\|_{L^{\frac{32}{11}}_t L^{\frac{8d}{4d-11}}_x} \lesssim N^{-1} \|\Delta I u\|_{L^{\frac{32}{11}}_t L^{\frac{8d}{4d-11}}_x} \lesssim N^{-1} Z_I.
\]
As $F''(u)=O(|u|^{\frac{8}{d}-1})$, we use $(\ref{property 3})$ to get
\begin{align}
\|F''(u)\|_{L^{16}_t L^{\frac{4d}{15-2d}}_x} \lesssim \|u\|^{\frac{8}{d}-1}_{L^{\frac{16(8-d)}{d}}_t L^{\frac{4(8-d)}{15-2d}}_x} \lesssim Z_I^{\frac{8}{d}-1}. \label{estimate second derivative}
\end{align}
We thus obtain 
\begin{align}
|(\ref{almost estimate 3})| \lesssim N^{-1} Z_I^{2+\frac{8}{d}}. \label{almost estimate 3 final}
\end{align}
By H\"older's inequality,
\[
|(\ref{almost estimate 4})| \lesssim \|\Delta I u\|_{L^2_t L^{\frac{2d}{d-4}}_x} \|(\Delta I u) F'(u) - I(\Delta u F'(u))\|_{L^2_t L^{\frac{2d}{d+4}}_x}.
\]
We then apply Lemma $\ref{lem rougher estimate}$ with $q=\frac{2d}{d+4}, q_1 = \frac{2d(d-3)}{d^2-7d+16}$ and $q_2=\frac{d(d-3)}{2(2d-7)}$ to get
\[
\|(\Delta I u) F'(u) - I(\Delta u F'(u))\|_{L^{\frac{2d}{d+4}}_x} \lesssim N^{-\alpha} \|\Delta I u\|_{L^{\frac{2d(d-3)}{d^2-7d+16}}_x} \|\scal{\nabla}^\alpha F'(u)\|_{L^{\frac{d(d-3)}{2(2d-7)}}_x},
\]
where $\alpha=2-\gamma+\delta$. The H\"older inequality then implies
\[
\|(\Delta I u) F'(u) - I(\Delta u F'(u))\|_{L^2_t L^{\frac{2d}{d+4}}_x} \lesssim N^{-\alpha} \|\Delta I u\|_{L^{\frac{2(d-3)}{d-4}}_t L^{\frac{2d(d-3)}{d^2-7d+16}}_x} \|\scal{\nabla}^\alpha F'(u)\|_{L^{2(d-3)}_t L^{\frac{d(d-3)}{2(2d-7)}}_x}.
\]
We have from $(\ref{estimate commutator}), (\ref{estimate 2}), (\ref{estimate 3})$ and $(\ref{estimate 4})$ that
\[
\|\scal{\nabla}^\alpha F'(u)\|_{L^{2(d-3)}_t L^{\frac{d(d-3)}{2(2d-7)}}_x} \lesssim Z_I^{\frac{8}{d}}.
\]
Thus
\begin{align}
|(\ref{almost estimate 4})| \lesssim N^{-(2-\gamma+\delta)} Z_I^{2+\frac{8}{d}}. \label{almost estimate 4 final}
\end{align}
Similarly, we bound
\begin{align}
|(\ref{almost estimate 5})| &\lesssim \|\Delta I u\|_{L^4_t L^{\frac{2d}{d-2}}_x} \|(I\nabla u)\cdot \nabla u F''(u) - I(\nabla u \cdot \nabla u F''(u))\|_{L^{\frac{4}{3}}_t L^{\frac{2d}{d+2}}_x}. \label{almost estimate 5 sub 1}
\end{align}
Applying Lemma $\ref{lem rougher estimate}$ with $q=\frac{2d}{d+2}, q_1 = \frac{8d}{4d-11}$ and $q_2=\frac{8d}{19}$ and using H\"older inequality, we have
\begin{align}
\|(I\nabla u)\cdot \nabla u F''(u) - I(\nabla u \cdot \nabla u F''(u))\|_{L^{\frac{4}{3}}_t L^{\frac{2d}{d+2}}_x} \lesssim N^{-\alpha} \|I\nabla u\|_{L^{\frac{32}{11}}_t L^{\frac{8d}{4d-11}}_x} \|\scal{\nabla}^\alpha (\nabla u F''(u)) \|_{L^{\frac{8}{5}}_t L^{\frac{8d}{19}}_x}. \label{almost estimate 5 sub 2}
\end{align}
The fractional chain rule implies
\begin{multline}
\|\scal{\nabla}^\alpha (\nabla u F''(u)) \|_{L^{\frac{8}{5}}_t L^{\frac{8d}{19}}_x} \lesssim \|\scal{\nabla}^{\alpha+1} u\|_{L^{\frac{32}{11}}_t L^{\frac{8d}{4d-11}}_x} \|F''(u)\|_{L^{16}_t L^{\frac{4d}{15-2d}}_x} \\
+ \|\nabla u\|_{L^{\frac{32}{11}}_t L^{\frac{8d}{4d-11}}_x} \|\scal{\nabla}^\alpha F''(u)\|_{L^{16}_t L^{\frac{4d}{15-2d}}_x}. \label{almost estimate 5 sub 3}
\end{multline}
By our assumptions on $\gamma$ and $\delta$, we see that $\alpha+1<\gamma$. By $(\ref{property 3})$ (and dropping the $I$-operator if necessary) and $(\ref{estimate second derivative})$, 
\begin{align}
\|I\nabla u\|_{L^{\frac{32}{11}}_t L^{\frac{8d}{4d-11}}_x}, \|\nabla u\|_{L^{\frac{32}{11}}_t L^{\frac{8d}{4d-11}}_x}, \|\scal{\nabla}^{\alpha+1} u\|_{L^{\frac{32}{11}}_t L^{\frac{8d}{4d-11}}_x} \lesssim Z_I, \quad  \|F''(u)\|_{L^{16}_t L^{\frac{4d}{15-2d}}_x} \lesssim Z_I^{\frac{8}{d}-1}. \label{almost estimte 5 sub 4}
\end{align}
Here $\left(\frac{32}{11}, \frac{8d}{4d-11}\right)$ is biharmonic admissible. It remains to bound $\|\scal{\nabla}^\alpha F''(u)\|_{L^{16}_t L^{\frac{4d}{15-2d}}_x}$. To do so, we use
\begin{align}
\|\scal{\nabla}^\alpha F''(u)\|_{L^{16}_t L^{\frac{4d}{15-2d}}_x} \lesssim \|F''(u)\|_{L^{16}_t L^{\frac{4d}{15-2d}}_x} + \||\nabla|^\alpha F''(u)\|_{L^{16}_t L^{\frac{4d}{15-2d}}_x}. \label{almost estimate 5 sub 5}
\end{align}
The first term in the right hand side is treated in $(\ref{estimate second derivative})$. For the second term in the right hand side, we make use of the fractional chain rule given in Lemma $\ref{lem fractional chain rule holder}$ with $\beta=\frac{8}{d}-1$, $\alpha=2-\gamma+\delta$, $q=\frac{4d}{15-2d}$ and $q_1, q_2$ satisfying
\[
\Big(\frac{8}{d}-1 -\frac{\alpha}{\rho}\Big) q_1 = \frac{\alpha}{\rho}q_2=\frac{4(8-d)}{15-2d},
\]
and $\frac{\alpha}{\frac{8}{d}-1}<\rho<1$. Note that the choice of $\rho$ is possible since $\alpha <\frac{8}{d}-1$ by our assumptions. With these choices, we have
\[
\Big(1-\frac{\alpha}{\beta \rho}\Big) q_1 = \frac{4d}{15-2d}>1,
\]
for $5\leq d\leq 7$. Then,
\begin{align*}
\||\nabla|^\alpha F''(u)\|_{L^{\frac{4d}{15-2d}}_x} \lesssim \||u|^{\frac{8}{d}-1-\frac{\alpha}{\rho}} \|_{L^{q_1}_x} \||\nabla|^\rho u\|^{\frac{\alpha}{\rho}}_{L^{\frac{\alpha}{\rho} q_2}_x} \lesssim \|u\|^{\frac{8}{d}-1-\frac{\alpha}{\rho}}_{L^{\left(\frac{8}{d}-1-\frac{\alpha}{\rho}\right)q_1}_x} \||\nabla|^\rho u\|^{\frac{\alpha}{\rho}}_{L^{\frac{\alpha}{\rho} q_2}_x}.
\end{align*}
By H\"older's inequality,
\begin{align*}
\||\nabla|^\alpha F''(u)\|_{L^{16}_tL^{\frac{4d}{15-2d}}_x} &\lesssim  \|u\|^{\frac{8}{d}-1-\frac{\alpha}{\rho}}_{L^{\left(\frac{8}{d}-1-\frac{\alpha}{\rho}\right)p_1}_tL^{\left(\frac{8}{d}-1-\frac{\alpha}{\rho}\right)q_1}_x} \||\nabla|^\rho u\|^{\frac{\alpha}{\rho}}_{L^{\frac{\alpha}{\rho} p_2}_tL^{\frac{\alpha}{\rho} q_2}_x} \\
&=\|u\|^{\frac{8}{d}-1-\frac{\alpha}{\rho}}_{L^{\frac{16(8-d)}{d}}_tL^{\frac{4(8-d)}{15-2d}}_x} \||\nabla|^\rho u\|^{\frac{\alpha}{\rho}}_{L^{\frac{16(8-d)}{d}}_t L^{\frac{4(8-d)}{15-2d}}_x},
\end{align*}
provided
\[
\Big(\frac{8}{d}-1 -\frac{\alpha}{\rho}\Big) p_1 = \frac{\alpha}{\rho}p_2=\frac{16(8-d)}{d}.
\]
Since $\left(\frac{16(8-d)}{d}, \frac{4(8-d)}{15-2d}\right)$ is biharmonic admissible, we have from $(\ref{property 3})$ with the fact $0<\rho<1<\gamma$ that
\begin{align}
\||\nabla|^\alpha F''(u)\|_{L^{16}_tL^{\frac{4d}{15-2d}}_x} \lesssim Z_I^{\frac{8}{d}-1}. \label{almost estimate 5 sub 6}
\end{align}
Collecting from $(\ref{almost estimate 5 sub 1})$ to $(\ref{almost estimate 5 sub 6})$, we get
\begin{align}
|(\ref{almost estimate 5})| \lesssim N^{-(2-\gamma+\delta)} Z_I^{2+\frac{8}{d}}. \label{almost estimate 5 final}
\end{align}
Finally, we consider $(\ref{almost estimate 6})$. We bound
\begin{align}
|(\ref{almost estimate 6})| &\lesssim \||\nabla|^{-1} IF(u)\|_{L^2_tL^{\frac{2d}{d-2}}_x} \|\nabla(F(Iu)-IF(u))\|_{L^2_t L^{\frac{2d}{d+2}}_x} \nonumber \\
&\lesssim \|\nabla I F(u)\|_{L^2_t L^{\frac{2d}{d+2}}_x} \|\nabla(F(Iu)-IF(u))\|_{L^2_t L^{\frac{2d}{d+2}}_x}.  \label{almost estimate 6 sub 1}
\end{align}
By $(\ref{commutator estimate 3})$, 
\[
\|\nabla I F(u)\|_{L^2_t L^{\frac{2d}{d+2}}_x} \lesssim Z_I^{1+\frac{8}{d}}.
\]
By the triangle inequality, we estimate
\[
\|\nabla(F(Iu)-IF(u))\|_{L^2_t L^{\frac{2d}{d+2}}_x} \lesssim \|(\nabla I u) (F'(Iu)-F'(u))\|_{L^2_t L^{\frac{2d}{d+2}}_x} + \|(\nabla I u) F'(u)- \nabla I F(u)\|_{L^2_t L^{\frac{2d}{d+2}}_x}.
\]
We firstly use H\"older's inequality and  estimate as in $(\ref{almost estimate 1 sub 1})$ to get
\begin{align}
\|(\nabla I u)(F'(Iu)-F'(u))\|_{L^2_t L^{\frac{2d}{d+2}}_x} &\lesssim \|\nabla I u\|_{L^\infty_t L^{\frac{2d}{d-2}}_x} \|F'(Iu)-F'(u)\|_{L^2_t L^{\frac{d}{2}}_x} \nonumber \\
&\lesssim \|\Delta I u\|_{L^\infty_t L^2_x} \|P_{>N} u\|_{L^{\frac{16}{d}}_t L^4_x} \|u\|^{\frac{8}{d}-1}_{L^{\frac{16}{d}}_t L^4_x} \nonumber \\
&\lesssim N^{-2} Z_I^{1+\frac{8}{d}}. \label{almost estimate 6 sub 2}
\end{align}
By $(\ref{commutator estimate 1})$, 
\begin{align}
\|(\nabla I u) F'(u)- \nabla I F(u)\|_{L^2_t L^{\frac{2d}{d+2}}_x} \lesssim N^{-(2-\gamma+\delta)} Z_I^{1+\frac{8}{d}}. \label{almost estimate 6 sub 3}
\end{align}
Combining $(\ref{almost estimate 6 sub 1})-(\ref{almost estimate 6 sub 3})$, we get
\begin{align}
|(\ref{almost estimate 6})| \lesssim Z_I^{1+\frac{8}{d}}(N^{-2} Z_I^{1+\frac{8}{d}} + N^{-(2-\gamma+\delta)} Z_I^{1+\frac{8}{d}}). \label{almost estimate 6 final}
\end{align}
The desired estimate $(\ref{almost conservation law})$ follows from $(\ref{almost estimate 1 final}), (\ref{almost estimate 2 final}), (\ref{almost estimate 3 final}), (\ref{almost estimate 4 final}), (\ref{almost estimate 6 final})$ and $(\ref{control size Z_I})$. The proof is complete.
\end{proof}
\section{Global well-posedness} \label{section global well-posedness}
\setcounter{equation}{0}
Let us now show the global existence given in Theorem $\ref{theorem global existence}$. By density argument, we assume that $u_0 \in C^\infty_0(\R^d)$. Let $u$ be a global solution to (NL4S) with initial data $u_0$. 
In order to apply the almost conservation law, we need the modified energy of initial data to be small. Since $E(Iu_0)$ is not necessarily small, we will use the scaling $(\ref{scaling})$ to make $E(Iu_\lambda(0))$ small. We have
\begin{align}
E(Iu_\lambda(0))=\frac{1}{2}\|Iu_\lambda (0)\|^2_{\dot{H}^2_x} +\frac{d}{2d+8} \|Iu_\lambda (0)\|^{\frac{2d+8}{d}}_{L^{\frac{2d+8}{d}}_x}. \label{modified energy}
\end{align}
We use $(\ref{property 5})$ to estimate
\begin{align}
\|Iu_\lambda(0)\|_{\dot{H}^2_x} \lesssim N^{2-\gamma}\|u_\lambda(0)\|_{\dot{H}^\gamma_x}= N^{2-\gamma} \lambda^{-\gamma} \|u_0\|_{\dot{H}^\gamma_x}. \label{homogeneous norm}
\end{align}
In order to make $\|Iu_\lambda(0)\|_{\dot{H}^2_x} \leq \frac{1}{8}$, we choose
\begin{align}
\lambda \approx N^{\frac{2-\gamma}{\gamma}}. \label{choice of lambda}
\end{align}
We next bound $\|I u_\lambda(0)\|_{L^{\frac{2d+8}{d}}_x}$. Note that we can easily estimate this norm by the Sobolev embedding  
\[
\|I u_\lambda(0)\|_{L^{\frac{2d+8}{d}}_x} \lesssim \|u_\lambda(0)\|_{L^{\frac{2d+8}{d}}_x} =\lambda^{-\frac{2d}{d+4}}\|u_0\|_{L^{\frac{2d+8}{d}}_x} \lesssim \lambda^{-\frac{2d}{d+4}} \|u_0\|_{H^\gamma_x}, 
\] 
but it requires $\gamma \geq \frac{2d}{d+4}$. In order to remove this requirement, we use the technique of \cite{I-teamglobal} (see also \cite{MiaoWuZhang}). We firstly separate the frequency space into the domains 
\[
\Omega_1:=\Big\{\xi \in \R^d, \  |\xi| \lesssim \frac{1}{\lambda}\Big\}, \quad \Omega_2:=\Big\{ \xi \in \R^d, \ \frac{1}{\lambda} \lesssim |\xi| \lesssim N \Big\}, \quad \Omega_3:= \Big\{ \xi\in \R^d, \ |\xi| \gtrsim N \Big\},
\]
and then write
\[
[u_\lambda(0)]\widehat{\ }(\xi)=(\chi_1(\xi) + \chi_2(\xi) + \chi_3(\xi)) [u_\lambda(0)]\widehat{\ } (\xi),
\] 
for non-negative smooth functions $\chi_j$ supported in $\Omega_j, j=1,2,3$ respectively and satisfying $\sum \chi_j(\xi)=1$. Thus
\[
I u_\lambda(0) = \chi_1(D) I u_\lambda(0) + \chi_2(D) I u_\lambda(0) + \chi_3(D) I u_\lambda(0).
\]
We now use the Sobolev embedding to have
\[
\|\chi_1(D)I u_\lambda(0)\|_{L^{\frac{2d+8}{d}}_x} \lesssim \||\nabla|^{\frac{2d}{d+4}} \chi_1(D) I u_\lambda(0)\|_{L^2_x} \lesssim \||\nabla|^{\frac{2d}{d+4}} \chi_1(D) I\|_{L^2_x \rightarrow L^2_x}\|u_\lambda(0)\|_{L^2_x}.
\]
Thanks to the support of $\chi_1$, the functional calculus gives 
\begin{align}
\||\nabla|^{\frac{2d}{d+4}} \chi_1(D) I\|_{L^2_x \rightarrow L^2_x} \lesssim \||\xi|^{\frac{2d}{d+4}-\alpha} |\xi|^\alpha \chi_1(\xi)\|_{L^\infty_\xi} \lesssim \lambda^{\alpha-\frac{2d}{d+4}}, \label{norm estimate 1}
\end{align}
provided $0<\alpha <\frac{2d}{d+4}$. Similarly,
\[
\|\chi_3(D) I u_\lambda(0)\|_{L^{\frac{2d+8}{d}}_x} \lesssim \||\nabla|^{\frac{2d}{d+4}} \chi_3(D) I u_\lambda(0)\|_{L^2_x} \lesssim \||\nabla|^{\frac{2d}{d+4}-\gamma} \chi_3(D) I\|_{L^2_x \rightarrow L^2_x} \|u_\lambda(0)\|_{\dot{H}^\gamma_x}.
\]
A direct computation shows
\begin{align}
\|u_\lambda(0)\|_{\dot{H}^\gamma_x} = \lambda^{-\gamma}\|u_0\|_{\dot{H}^\gamma_x}. \label{H gamma norm}
\end{align}
Using the support of $\chi_3$, the functional calculus again gives
\begin{align}
\||\nabla|^{\frac{2d}{d+4}-\gamma} \chi_3(D) I\|_{L^2_x \rightarrow L^2_x} \lesssim \||\xi|^{\frac{2d}{d+4}-\gamma}\chi_3(\xi) (N|\xi|^{-1})^{2-\gamma}\|_{L^\infty_\xi} \lesssim N^{\frac{2d}{d+4}-\gamma}. \label{estimate chi 3}
\end{align}
To obtain this bound, we split into two cases. \newline
\indent When $\frac{2d}{d+4} \geq \gamma$, we simply bound
\[
\||\xi|^{\frac{2d}{d+4}-\gamma}\chi_3(\xi) (N|\xi|^{-1})^{2-\gamma}\|_{L^\infty_\xi} \lesssim 1 \lesssim N^{\frac{2d}{d+4}-\gamma}.
\]
\indent When $\gamma> \frac{2d}{d+4}$, we write
\[
\||\xi|^{\frac{2d}{d+4}-\gamma}\chi_3(\xi) (N|\xi|^{-1})^{2-\gamma}\|_{L^\infty_\xi}= N^{\frac{2d}{d+4}-\gamma} \|(N|\xi|^{-1})^{\gamma-\frac{2d}{d+4}} \chi_3(\xi) (N|\xi|^{-1})^{2-\gamma}\|_{L^\infty_\xi} \lesssim N^{\frac{2d}{d+4}-\gamma}.
\]
Combining $(\ref{H gamma norm})$ and $(\ref{estimate chi 3})$, we get
\begin{align}
\|\chi_3(D) I u_\lambda(0)\|_{L^{\frac{2d+8}{d}}_x} \lesssim N^{\frac{2d}{d+4}-\gamma}\lambda^{-\gamma}\|u_0\|_{\dot{H}^\gamma_x}. \label{norm estimate 2}
\end{align}
We treat the intermediate case as
\[
\|\chi_2(D) I u_\lambda(0)\|_{L^{\frac{2d+8}{d}}_x} \lesssim \||\nabla|^{\frac{2d}{d+4}-\gamma} \chi_2(D) I\|_{L^2_x \rightarrow L^2_x} \|u_\lambda(0)\|_{\dot{H}^\gamma_x}.
\] 
We have
\[
\||\nabla|^{\frac{2d}{d+4}-\gamma} \chi_2(D) I\|_{L^2_x \rightarrow L^2_x} \lesssim \|\xi|^{\frac{2d}{d+4}-\gamma} \chi_2(\xi)\|_{L^\infty_\xi}. 
\]
\indent When $\frac{2d}{d+4}\geq \gamma$, we bound
\[
\|\xi|^{\frac{2d}{d+4}-\gamma} \chi_2(\xi)\|_{L^\infty_\xi} \lesssim N^{\frac{2d}{d+4}-\gamma}.
\]
\indent When $\gamma>\frac{2d}{d+4}$, we write
\[
\|\xi|^{\frac{2d}{d+4}-\gamma} \chi_2(\xi)\|_{L^\infty_\xi} = \|\xi|^{\frac{2d}{d+4}-\gamma-\beta} |\xi|^\beta \chi_2(\xi)\|_{L^\infty_\xi} \lesssim \lambda^{\beta+\gamma-\frac{2d}{d+4}},
\]
provided $\frac{2d}{d+4}-\gamma <\beta <\frac{2d}{d+4}$. These estimates together with $(\ref{H gamma norm})$ yield
\begin{align}
\|\chi_2(D) I u_\lambda(0)\|_{L^{\frac{2d+8}{d}}_x} \lesssim \left\{ \begin{array}{l l}
N^{\frac{2d}{d+4}-\gamma} \lambda^{-\gamma} \|u_0\|_{\dot{H}^\gamma_x} & \text{ if } \frac{2d}{d+4} \geq \gamma, \\
\lambda^{\beta -\frac{2d}{d+4}} \|u_0\|_{\dot{H}^\gamma_x} & \text{ if } \gamma>\frac{2d}{d+4}.
\end{array} \right. \label{norm estimate 3}
\end{align}
Collecting $(\ref{norm estimate 1}), (\ref{norm estimate 2}), (\ref{norm estimate 3})$ and use $(\ref{choice of lambda})$, we obtain
\begin{align}
\|Iu_\lambda(0)\|_{L^{\frac{2d+8}{d}}_x} \lesssim (\lambda^{\alpha-\frac{2d}{d+4}} + \lambda^{\beta -\frac{2d}{d+4}} + \lambda^{-\frac{8\gamma}{(d+4)(2-\gamma)}}) \|u_0\|_{H^\gamma_x}, \label{inhomogeneous norm}
\end{align}
for some $0<\alpha<\frac{2d}{d+4}$ and $\frac{2d}{d+4}-\gamma <\beta<\frac{2d}{d+4}$. Therefore, it follows from $(\ref{modified energy}), (\ref{homogeneous norm}), (\ref{choice of lambda})$ and $(\ref{inhomogeneous norm})$ by taking $\lambda$ sufficiently large depending on $\|u_0\|_{H^\gamma_x}$ and $N$ (which will be chosen later and depends only on $\|u_0\|_{H^\gamma_x}$) that
\[
E(Iu_\lambda(0)) \leq \frac{1}{4}.
\] 
Now let $T$ be arbitrarily large. We define
\[
X:=\{ 0\leq t\leq \lambda^4T \ | \ \|u_\lambda\|_{M([0,t])} \leq K t^{\frac{d-4}{8(d-3)}} \},
\]
with $K$ a constant to be chosen later. Here $M(J)$ is given in $(\ref{interaction morawetz interpolation})$. We claim that $X=[0,\lambda^4 T]$. Assume by contradiction that it is not so. Since $\|u_\lambda\|_{M([0,t])}$ is a continuous function of time, there exists $T_0 \in [0,\lambda^4T]$ such that
\begin{align}
\|u_\lambda\|_{M([0,T_0])} &> K T_0^{\frac{d-4}{8(d-3)}}, \label{lower bound} \\
\|u_\lambda\|_{M([0,T_0])} &\leq 2K T_0^{\frac{d-4}{8(d-3)}}. \label{upper bound}
\end{align}
Using $(\ref{upper bound})$, we are able to split $[0,T_0]$ into subintervals $J_k, k=1,...,L$ in such a way that
\[
\|u_\lambda\|_{M(J_k)} \leq \mu,
\]
where $\mu$ is as in Proposition $\ref{prop almost conservation law}$. The number $L$ of possible subinterval must satisfy
\begin{align}
L \sim \Big(\frac{2K T_0^{\frac{d-4}{8(d-3)}}}{\mu}\Big)^{\frac{8(d-3)}{d}} \sim T_0^{\frac{d-4}{d}}. \label{condition of L (1)}
\end{align}
Next, thanks to Proposition $\ref{prop almost conservation law}$, we see that for $1<\gamma<2$ and any $0<\delta<\gamma-1$,
\[
\sup_{[0,T_0]} E(Iu_\lambda(t)) \lesssim E(Iu_\lambda(0)) + N^{-(2-\gamma+\delta)} L,
\]
for $\max\left\{3-\frac{8}{d},\frac{8}{d} \right\} <\gamma<2$ and $0<\delta<\gamma+\frac{8}{d}-3$. Since $E(Iu_\lambda(0)) \leq \frac{1}{4}$, we need 
\begin{align}
N^{-(2-\gamma+\delta)} L \ll \frac{1}{4} \label{condition of L (2)}
\end{align}
in order to guarantee 
\begin{align}
E(Iu_\lambda(t)) \leq 1, \label{small modified energy norm}
\end{align} 
for all $t\in [0,T_0]$. As $T_0 \leq \lambda^4 T$, we have from $(\ref{condition of L (1)})$ and $(\ref{condition of L (2)})$ ant the choice of $\lambda$ given in $(\ref{choice of lambda})$ that
\begin{align}
N^{-(2-\gamma+\delta)} N^{\frac{4(2-\gamma)(d-4)}{\gamma d}} T^{\frac{d-4}{d}} \ll \frac{1}{4}, \label{relation N and T}
\end{align}
or 
\begin{align}
\frac{4(2-\gamma)(d-4)}{\gamma d} < 2-\gamma+\delta, \label{global condition}
\end{align}
for $\max \left\{3-\frac{8}{d}, \frac{8}{d}\right\}<\gamma<2 $ and $0<\delta<\gamma+\frac{8}{d}-3$. Since $2-\gamma+\delta <\frac{8}{d}-1$, the condition $(\ref{global condition})$ is possible if we have
\[
\frac{4(2-\gamma)(d-4)}{\gamma d} < \frac{8}{d}-1.
\]
This implies $\gamma>\frac{8(d-4)}{3d-8}$. Thus 
\[
\gamma>\max\left\{3-\frac{8}{d}, \frac{8}{d}, \frac{8(d-4)}{3d-8} \right\}.
\]
Next, by $(\ref{interaction morawetz interpolation})$, 
\[
\|u_\lambda\|_{M([0,T_0])} \lesssim T_0^{\frac{d-4}{8(d-3)}} \|u_0\|_{L^2_x}^{\frac{1}{d-3}} \|u_\lambda\|^{\frac{d-4}{d-3}}_{L^\infty_t([0,T_0],\dot{H}^{\frac{1}{2}}_x)}.
\]
We use $(\ref{property 2})$ and the definition of the $I$-operator to estimate
\begin{align*}
\|u_\lambda(t)\|_{\dot{H}^{\frac{1}{2}}_x} &\leq \|P_{\leq N} u_\lambda(t)\|_{\dot{H}^{\frac{1}{2}}_x} + \|P_{>N}u_\lambda(t)\|_{\dot{H}^{\frac{1}{2}}_x} \\
&\lesssim \|P_{\leq N} u_\lambda(t)\|_{L^2_x}^{\frac{3}{4}} \|P_{\leq N} u_\lambda(t)\|^{\frac{1}{4}}_{\dot{H}^2_x} + N^{-\frac{3}{2}} \|I u_\lambda(t)\|_{\dot{H}^2_x} \\
&\lesssim \|u_0\|_{L^2_x}^{\frac{3}{4}} \|Iu_\lambda(t)\|^{\frac{1}{4}}_{\dot{H}^2_x} + N^{-\frac{3}{2}} \|I u_\lambda(t)\|_{\dot{H}^2_x}.
\end{align*}
Thus,
\begin{align}
\|u_\lambda\|_{M([0,T_0])} \lesssim T_0^{\frac{d-4}{8(d-3)}} \|u_0\|_{L^2_x}^{\frac{1}{d-3}} \sup_{[0,T_0]} \Big( \|u_0\|_{L^2_x}^{\frac{3}{4}} \|Iu_\lambda(t)\|^{\frac{1}{4}}_{\dot{H}^2_x} + N^{-\frac{3}{2}} \|I u_\lambda(t)\|_{\dot{H}^2_x} \Big)^{\frac{d-4}{d-3}}. \label{morawetz norm u lambda}
\end{align}
Since $\|Iu_\lambda(t)\|_{\dot{H}^2_x}\lesssim \sqrt{E(Iu_\lambda(t))}$, we obtain from $(\ref{small modified energy norm})$ and $(\ref{morawetz norm u lambda})$,
\[
\|u_\lambda\|_{M([0,T_0])} \leq C T_0^{\frac{d-4}{8(d-3)}},
\] 
for some constant $C>0$.  This contradicts with $(\ref{lower bound})$ for an appropriate choice of $K$. We get $X=[0, \lambda^4T]$ with $T$ arbitrarily large and
\begin{align}
E(Iu_\lambda(\lambda^4T)) \leq 1. \label{modified energy estimate}
\end{align}
Note that under the condition of $\gamma$, we see from $(\ref{relation N and T})$ that the choice of $N$ makes sense for arbitrarily large $T$. 
Now, by the conservation of mass and $(\ref{modified energy estimate})$, we bound
\begin{align*}
\|u(T)\|_{H^\gamma_x}& \lesssim \|u(T)\|_{L^2_x}+ \|u(T)\|_{\dot{H}^\gamma_x} \lesssim \|u_0\|_{L^2_x}+ \lambda^\gamma\|u_\lambda(\lambda^4T)\|_{\dot{H}^\gamma_x} \\
&\lesssim \|u_0\|_{L^2_x}+ \lambda^\gamma\|Iu_\lambda(\lambda^4T)\|_{H^2_x}  \\
&\lesssim \lambda^\gamma \lesssim N^{2-\gamma}\lesssim T^{\alpha(\gamma,d)},
\end{align*}
where $\alpha(\gamma,d)$ is a positive number that depends on $\gamma$ and $d$. This a priori bound gives the global existence in $H^\gamma$. The proof is now complete.
\section*{Acknowledgments}
The author would like to express his deep gratitude to Prof. Jean-Marc BOUCLET for the kind guidance and encouragement. 


\end{document}